\documentclass[10pt,reqno]{amsart}
\usepackage{amssymb,amsfonts}
\usepackage[all,arc]{xy}
\usepackage{enumerate}
\usepackage{mathrsfs}
\usepackage{geometry}\geometry{margin=1.5in}
\usepackage{amsmath}
\usepackage{latexsym, bm}
\usepackage{indentfirst}
\usepackage{CJK}
\usepackage{amsthm}

\footskip=30pt

\newtheorem{thm}{Theorem}[section]

\newtheorem{lem}{Lemma}[section]
\newtheorem{prop}{Proposition}[section]

\theoremstyle{definition}

\newtheorem{remk}{Remark}[section]
\DeclareMathOperator{\tr}{tr}
\DeclareMathOperator{\re}{Re}
\DeclareMathOperator{\Ric}{Ric}
\DeclareMathOperator{\Rm}{Rm}
\bibliographystyle{plain}
\pagestyle{plain}
\numberwithin{equation}{section}
\begin{document}
\title{Regularity of A Complex Monge-Amp\`{e}re Equation on Hermitian Manifolds}
\author{Xiaolan Nie \vspace{-4ex}}
\maketitle
\begin{abstract} We obtain higher order estimates for a parabolic flow on a compact Hermitian manifold. As an application, we prove that a bounded $\hat{\omega}$-plurisubharmonic solution of an elliptic complex Monge-Amp\`{e}re equation is smooth under an assumption on the background Hermitian metric $\hat{\omega}$. This generalizes a result of Sz\'{e}kelyhidi and Tosatti on K\"{a}hler manifolds.
\end{abstract}

\section{Introduction}

In [18], Sz\'{e}kelyhidi and Tosatti studied regularity of weak solutions of the equation
\begin{align}
(\omega+\sqrt{-1}\partial\bar{\partial}\phi)^n=e^{-F(\phi, z)}\omega^n
\end{align}
on an $n$-dimensional compact K\"{a}hler manifold $(M,\omega)$, where $F: \mathbb{R}\times M\rightarrow \mathbb{R}$ is a smooth function and $\omega+\sqrt{-1}\partial\bar{\partial}\phi\geq 0 $ in the sense of currents.

 According to the local theory of Bedford-Taylor [1], for a locally bounded plurisubharmonic function $u$, the wedge product $(dd^c u)^k,\ 1\leq k\leq n$ is well defined, where $d^c=\frac{\sqrt{-1}}{2}(\bar{\partial}-\partial)$ and $dd^c=\sqrt{-1}\partial\bar{\partial}$.
Indeed, for such $u$ and $T$ a positive closed current, the current $uT$ is well defined and
$$dd^cu\wedge T:= dd^c(uT)$$
 is also a positive closed current. Then the wedge product $(dd^c u)^k,\ 1\leq k \leq n$ can be defined inductively as closed positive currents. Denote
$$PSH(M,\omega)=\{u: M\rightarrow [-\infty, +\infty)| u\ \text{is upper semicontinuous},\ \omega+\sqrt{-1}\partial\bar{\partial}u\geq 0\} $$
 the set of $\omega$-plurisubharmonic (short for $\omega$-psh) functions on $M$. If $\phi\in PSH(M, \omega)\cap L^\infty(M)$ solves equation $(1.1)$ in the above sense of Bedford-Taylor, we say $\phi$ is a weak solution of the equation. The H\"{o}lder continuity of weak solutions follows from Ko{\l}odziej [12]. Using a different approach, Sz\'{e}kelyhidi and Tosatti [18] proved that such weak solutions are actually smooth. Particularly, if $M$ is Fano, $\omega\in c_1(M)$ and $F(\phi, z)=\phi-h$, where $h$ satisfies $\sqrt{-1}\partial\bar{\partial}h=\Ric(\omega)-\omega$, their result implies that K\"{a}hler-Einstein currents with bounded potentials are smooth.
\let\thefootnote\relax\footnote{\textbf{Mathematics Subject Classification (2010)} \ 53C55, 35J96}

 In the proof of [18], the authors use the smoothing property of the corresponding parabolic flow
\begin{align}
\frac{\partial \varphi}{\partial t}=\log\frac{(\omega + \sqrt{-1}\partial \bar{\partial}\varphi)^n}{\omega^n}+F(\varphi, z).
\end{align}
They construct a function $\varphi\in C^0([0,T]\times M)\cap C^{\infty}((0,T]\times M)$ with $\varphi(0)= \phi$ which solves equation (1.2) on $(0,T]$, where $T$ depends only on $\sup|\phi|$, $F$ and $\omega$. Then they show that $\dot{\varphi}(t)=0$ for $0<t\leq T$ since the initial $\phi$ is a solution of (1.1). Therefore $\phi=\varphi(0)=\varphi(t)$ is smooth. Similar construction was previously used in Song-Tian [17] for the K\"{a}hler-Ricci flow.\\

 As equation (1.1) also makes sense on Hermitian manifolds, it is natural to consider the regularity of weak solutions of equation (1.1) in a more general setting. On a Hermitian manifold, there are no local potentials for $\omega$. However, $\omega$-psh functions are locally the sum of plurisubharmonic functions and smooth functions. Using this property and the wedge product of a smooth positive $(1, 1)$ form and a positive current is again positive, the current $(\omega+\sqrt{-1}\partial\bar{\partial}\phi)^n$ is still well defined and positive for bounded $\omega$-psh functions. For more details on pluripotential theory we refer to [1, 3, 8, 9].\\

 In this note, we show that the higher order estimates in [18] can be obtained on compact Hermitian manifolds. Particularly, the flow $(1.2)$ with smooth initial data $\varphi_0$ has a smooth solution for a time $T$ which depends only on $\sup|\varphi_0|, \sup|\dot{\varphi}_0|$. Then we obtain the following theorem.

\begin{thm}
Let $(M,\hat{g})$ be a $n$-dimensional compact Hermitian manifold with the fundamental 2-form $\hat{\omega}$ satisfying
\begin{align}
\forall \ u\in PSH(M,\hat{\omega})\cap L^\infty(M),\ \ \int_M(\hat{\omega}+\partial\bar{\partial}u)^n=\int_M\hat{\omega}^n
 \end{align}
and $F:\mathbb{R}\times M\rightarrow \mathbb{R}$ be a smooth function. Suppose that $\phi\in PSH(M,\hat{\omega})\cap L^\infty(M)$ solves \begin{align}
 (\hat{\omega}+\sqrt{-1}\partial\bar{\partial}\phi)^n=e^{-F(\phi, z)}\hat{\omega}^n
\end{align}
in the sense of currents. Then $\phi$ is smooth.
\end{thm}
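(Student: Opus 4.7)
Following Sz\'{e}kelyhidi and Tosatti, my strategy is to smooth $\phi$ by running the parabolic flow (1.2) with approximate initial data and then to argue that the resulting limit $\varphi(t)$ is independent of $t$, so that $\phi$ inherits the smoothness of $\varphi(t)$ for $t>0$. One first regularizes $\phi$ by smooth functions $\phi_\epsilon \in PSH(M,(1+\epsilon)\hat{\omega})$ with $\sup_M|\phi_\epsilon|$ uniformly bounded; on a compact Hermitian manifold such a decreasing sequence is available via Demailly-type regularization, with the small positivity loss absorbed by rescaling $\hat{\omega}$. Running the flow (1.2) from $\phi_\epsilon$ yields smooth solutions $\varphi_\epsilon$, and the higher-order estimates established earlier in the paper give $\epsilon$-uniform $C^k$ bounds on any compact subset of $(0,T]\times M$ for some uniform $T>0$ depending only on $\sup|\phi_\epsilon|$. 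Arzel\`{a}--Ascoli then produces a limit $\varphi \in C^\infty((0,T]\times M)$ solving (1.2), and a Hermitian pluripotential-theoretic stability result combined with hypothesis (1.3) extends $\varphi$ continuously to $[0,T]\times M$ with $\varphi(0,\cdot)=\phi$.

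\smallskip

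The core of the argument is to show $\dot{\varphi}\equiv 0$ on $(0,T]\times M$, which gives $\varphi(t)\equiv\phi$ and hence smoothness of $\phi$. Hypothesis (1.3) provides the conservation law $\int_M (\hat{\omega}+\sqrt{-1}\partial\bar{\partial}\varphi(t))^n = \int_M\hat{\omega}^n$ for every $t$, and since $\phi$ weakly solves (1.4) the same quantity equals $\int_M e^{-F(\phi,z)}\hat{\omega}^n$. Combining these with the flow equation and Jensen's inequality applied to the convex function $-\log$ (on the probability measure $\hat{\omega}^n/\!\int\!\hat{\omega}^n$) yields a monotone energy functional in $\dot{\varphi}$ that must vanish in the limit $t\to 0^+$; together with a parabolic maximum principle applied to the evolution equation of $\dot{\varphi}$ on $(0,T]$ this upgrades the integral conclusion to $\dot{\varphi}\equiv 0$ pointwise.

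\smallskip

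The main obstacle is this rigidity step. In the K\"{a}hler case treated by Sz\'{e}kelyhidi and Tosatti, the identity $\int_M(\omega+\sqrt{-1}\partial\bar{\partial}u)^n = \int_M\omega^n$ is automatic for all bounded $\omega$-psh $u$, and a Ko{\l}odziej-type stability result controls $\|u-\phi\|_{C^0}$ from integral data; in the Hermitian setting both facts can fail without further hypotheses, and condition (1.3) is imposed precisely to substitute for the missing cohomological identity. Verifying that (1.3) is strong enough both to extend $\varphi$ continuously down to $t=0$ and to drive the monotone energy argument is the central new input. The remaining steps---approximation, parabolic smoothing, and passage to the limit---then follow routinely once the higher-order estimates of the earlier sections are in hand.
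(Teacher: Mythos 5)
Your high-level skeleton (smooth $\phi$ by running the parabolic flow from approximate initial data, show $\dot{\varphi}\equiv 0$, conclude $\phi = \varphi(t)$ is smooth) matches the paper's strategy, but the route you propose has a genuine gap at its core: the choice of approximate initial data. Starting the flow from a Demailly-type regularization $\phi_\epsilon$ of $\phi$ gives no control on $\dot{\varphi}_\epsilon(0)=\log\frac{(\hat{\omega}+\sqrt{-1}\partial\bar{\partial}\phi_\epsilon)^n}{\hat{\omega}^n}+F(\phi_\epsilon,z)$, and this quantity enters twice in an essential way. First, the higher-order estimates of Proposition~3.1 (and the existence time $T$ with useful estimates) depend on $\sup|\dot{\varphi}_0|$, so without a uniform bound on $\dot{\varphi}_\epsilon(0)$ you cannot extract a limit by Arzel\`{a}--Ascoli on $(0,T]\times M$. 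Second, and more seriously, the decay estimate $|\dot{\varphi}(t)|\le\sup|\dot{\varphi}(0)|e^{Ct}$ from the parabolic maximum principle applied to (3.23) only forces $\dot{\varphi}\equiv 0$ if $\dot{\varphi}_\epsilon(0)\to 0$; for an arbitrary smoothing $\phi_\epsilon$ this fails. Your proposed substitute---Jensen's inequality applied to the flow's Monge--Amp\`{e}re density---only yields the one-sided integral bound $\int_M\dot{\varphi}\,\hat{\omega}^n\le\int_M F(\varphi,z)\,\hat{\omega}^n$ (after normalization), which is far from the pointwise conclusion $\dot{\varphi}\equiv 0$ and does not feed usefully into the maximum-principle step.

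The paper circumvents this precisely by choosing better initial data. First, condition (1.3) together with Dinew--Ko{\l}odziej gives \emph{a priori} continuity of $\phi$, so $\phi$ can be approximated in $\sup$-norm by arbitrary smooth $\phi_j$. The crucial move is then to take as initial data \emph{not} $\phi_j$ but the solutions $\psi_j$ of the auxiliary elliptic Monge--Amp\`{e}re equation $(\hat{\omega}+\sqrt{-1}\partial\bar{\partial}\psi_j)^n=c_je^{-F(\phi_j,z)}\hat{\omega}^n$, obtained from Tosatti--Weinkove. Then the flow's initial time-derivative collapses to $\dot{\varphi}_j(0)=F(\psi_j,z)-F(\phi_j,z)$, which tends to $0$ uniformly since Ko{\l}odziej's stability (extended to this Hermitian setting via (1.3)) gives $\psi_j\to\phi$ in $L^\infty$. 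That forces $\dot{\varphi}_j(t)\to 0$ for each $t>0$ via the exponential bound, and hence $\dot{\beta}\equiv 0$. Without this choice of $\psi_j$ solving a nearby elliptic problem, the rigidity step breaks down, and the Jensen-plus-max-principle argument does not repair it. A secondary issue is that $\phi_\epsilon\in PSH(M,(1+\epsilon)\hat{\omega})$ need not be $\hat{\omega}$-psh, so the flow (1.2) with background $\hat{\omega}$ is not obviously even well-posed at $t=0$ for such initial data; the paper avoids this entirely by producing genuinely $\hat{\omega}$-psh smooth $\psi_j$.
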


Here the assumption $(1.3)$ is automatically true on K\"{a}hler manifolds. In [18], the proof of the above theorem on K\"{a}hler manifolds needs Ko{\l}odziej's stability result [11]. We use the assumption $(1.3)$ from [3], under which the usual comparison principle is true, to make sure the stability result holds on such Hermitian manifolds [13]. Particularly, if $\hat{\omega}$ satisfies Guan-Li's [6] condition $\partial\bar{\partial} \hat{\omega}^k=0$, $k=1,2$, the assumption is satisfied. When $M$ is a complex surface, such metrics always exist due to a result of Gauduchon [4].

In the proof of our theorem, the main difference between the Hermitian case and K\"{a}hlar case lies in the $C^2, C^3$ estimates and bound for $|\Ric|$. The computation on Hermitian manifolds is more complicated due to the existence of torsion terms. The proof of the second order estimate follows closely the argument of Gill [5] and Tosatti-Weinkove [20]. For the third order estimate we make use of the arguments in Phong-\u{S}e\u{s}um-Sturm [14] and Sherman-Weinkove [19]. Such estimate for the first derivative of the evolving Hermitian metrics was also established in [24], where the authors took a local reference K\"{a}hler metric to obtain a good bound. To bound $|\Ric|$, we need to deal with the new terms involving $|\nabla\Ric|$ very carefully.

The techniques used in this paper can be applied to construct a weak solution of the Chern-Ricci flow [22, 23, 24] with singular initial Gauduchon metric on complex surfaces. In [22], Tosatti and Weinkove conjectured that if the Chern-Ricci flow starting from a Gauduchon metric is non-collapsing in finite time, then it blows down finitely many exceptional curves and continues in a unique way on a new complex surface. They proved in [23] the smooth convergence of the metrics away from the exceptional curves and the global Gromov-Hausdorff convergence (under a suitable condition) as $t$ approaches the singular time. It is expected that the flow can continue on the new surface from the push-down of the limiting current. We will investigate this in further work.

The paper is organized as follows. In section 2, we give some background material on Hermitian manifolds. Then we use maximum principle to obtain the estimates for existence of the parabolic flow for a short time depending only on $\sup |\varphi_0|$, $\sup |\dot{\varphi}_0|$. In section 4, we use the smoothing property of the parabolic flow to prove Theorem 1.1.\\

\noindent\textbf{Acknowledgements.} The author would like to thank her advisor Jiaping Wang for constant support, encouragement and many helpful discussions. The author also thanks Valentino Tosatti and Ben Weinkove for suggesting her the problem and helpful comments on the first version of this paper. In addition, the author is grateful to S{\l}awomir Ko{\l}odziej for the clarification on the stability result and to Haojie Chen for many useful conversations.

\section{Preliminaries}

For reader's convenience, in this section we introduce some basic material on Hermitian manifolds. The formulas given here can be found in [19].

Let $(M,g)$ be a $n$-dimensional compact Hermitian manifold with the fundamental 2-form $\omega=\sqrt{-1}g_{i\bar{j}}dz^i\wedge dz^{\bar{j}}$ in local coordinates. Denote $\nabla$ the Chern connection of $g$ with Christoffel symbols $\Gamma_{ij}^k$ and torsion $T$ given by:
$$\Gamma_{ij}^k=g^{k\bar{l}}\partial_i g_{j\bar{l}}, \ \ \ \ T_{ij}^k=\Gamma_{ij}^k-\Gamma_{ji}^k.$$ The covariant derivatives of $X=X^j\frac{\partial}{\partial z^j}$ and $a=a_jdz^j$ are defined in components as
$$\nabla_i X^j=\partial_i X^j+\Gamma^j_{ik}X^k, \ \ \nabla_i a_j=\partial_ia_j-\Gamma^k_{ij}a_k.$$
Then $\nabla$ can be extended naturally to any tensors.
Define the Chern curvature tensor of $g$ in components to be $$R_{i\bar{j}k}^{\ \ \ l}=-\partial_{\bar{j}}\Gamma^l_{ik}.$$ We lower and raise indices using metric g. Then $$R_{i\bar{j}k\bar{l}}=-\partial_i\partial_{\bar{j}}g_{k\bar{l}}+g^{p\bar{q}}\partial_ig_{k\bar{q}}\partial_{\bar{j}}g_{p\bar{l}}.$$ and the Chern-Ricci tensor is given by $$ R_{i\bar{j}}=g^{k\bar{l}}R_{i\bar{j}k\bar{l}}=-\partial_i\partial_{\bar{j}}\log \det g$$
We have the following commutation formulas:
\begin{align}\begin{split}
[\nabla_i, \nabla_{\bar{j}}]X^l=R_{i\bar{j}k}^{\ \ \ l}X^k, \ \ \ [\nabla_i, \nabla_{\bar{j}}]a_k=-R_{i\bar{j}k}^{\ \ \ l}a_l\\
[\nabla_i, \nabla_{\bar{j}}]\overline{X^l}=-R_{i\bar{j}\ \bar{k}}^{\ \ \bar{l}}\overline{X^k}, \ \ \ [\nabla_i, \nabla_{\bar{j}}]\overline{a_k}=R_{i\bar{j}\ \bar{k}}^{\ \ \bar{l}}\overline{a_l}\end{split}
\end{align}
The Bianchi identities will not hold necessarily for general Hermitian manifolds. There are extra torsion terms in the following identities.
\begingroup
\addtolength{\jot}{.2em}
\begin{align}
\begin{split}
R_{i\bar{j}k\bar{l}}-R_{k\bar{j}i\bar{l}}&=-\nabla_{\bar{j}}T_{ik\bar{l}}\\
R_{i\bar{j}k\bar{l}}-R_{i\bar{l}k\bar{j}}&=-\nabla_iT_{\bar{j}\bar{l}k}\\
R_{i\bar{j}k\bar{l}}-R_{k\bar{l}i\bar{j}}&=-\nabla_{\bar{j}}T_{ik\bar{l}}-\nabla_kT_{\bar{j}\bar{l}i}\\
\nabla_p R_{i\bar{j}k\bar{l}}-\nabla_i R_{p\bar{j}k\bar{l}}&=-T^{\ \ r}_{pi}R_{r\bar{j}k\bar{l}}\\
\nabla_{\bar{q}} R_{i\bar{j}k\bar{l}}-\nabla_{\bar{j}} R_{i\bar{q}k\bar{l}}&=-T^{\ \ \bar{s}}_{\bar{q}\bar{j}}R_{i\bar{s}k\bar{l}}.
\end{split}
\end{align}
\endgroup

\section{estimates for the parabolic flow}

Consider the following parabolic equation on a compact Hermitian manifold $(M ,\hat{\omega})$,
\begin{align}
\frac{\partial \varphi}{\partial t}=\log\frac{(\hat{\omega} + \sqrt{-1}\partial \bar{\partial}\varphi)^n}{\hat{\omega}^n}+F(\varphi, z)
\end{align}
where $F:\mathbb{R}\times M \rightarrow \mathbb{R}$ is a smooth function and $\varphi|_{t=0}=\varphi_0$ is smooth. By the theory of parabolic equations, there exists a unique smooth solution $\varphi(t)$ with $\hat{\omega} + \sqrt{-1}\partial \bar{\partial}\varphi>0$ for a short time. Denote $\dot{\varphi}$ for $\frac{\partial \varphi}{\partial t}$. We have the following proposition which generalizes the estimates in [18] to compact Hermitian manifolds.
\begin{prop}
Given a compact Hermitian manifold $(M,\hat{\omega})$, there exists $T>0$ depending only on $\sup|\varphi_0|$ and $F$ such that the above equation has a smooth solution $\varphi(t,z)$on $[0,T]$. Moreover, there exist smooth functions $C_k(t)$ on $(0,T]$ depending only on $\sup|\varphi_0|,\ \sup|\dot{\varphi}_0|,\  \hat{\omega}$ and $F$ which blow up as $t\rightarrow 0$ such that
$$\| \varphi(t)\|_{C^k(M)} < C_k(t)$$
for $t\leq T$.
\end{prop}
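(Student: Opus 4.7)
The plan is to follow a standard parabolic bootstrap. Since the initial data is smooth, short-time existence follows from classical inverse-function-theorem arguments in parabolic H\"older spaces, so the substance of the proposition lies in the a priori estimates, which I arrange to be smoothing: the length $T$ of the time interval depends only on $\sup|\varphi_0|$ and $F$, while the spatial $C^k$ bounds depend only on $\sup|\varphi_0|$, $\sup|\dot\varphi_0|$, $\hat\omega$ and $F$, and are allowed to blow up as $t\to 0^+$. The blow-up is produced by inserting positive powers of $t$ into the usual maximum-principle test functions, in the spirit of Song--Tian and Sz\'ekelyhidi--Tosatti.

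For the $C^0$ estimate of $\varphi$, I would apply the maximum principle to $\varphi\mp Ct$: at an interior maximum $\sqrt{-1}\partial\bar\partial\varphi\leq 0$ forces $\log(\omega^n/\hat\omega^n)\leq 0$, hence $\dot\varphi\leq F(\varphi,z)$, and choosing $C$ larger than the supremum of $F$ on the relevant range pushes the maximum onto $\{t=0\}$; this yields $|\varphi|\leq\sup|\varphi_0|+Ct$ provided $T$ is small relative to $F$ and $\sup|\varphi_0|$. Differentiating (3.1) in $t$ produces the linear parabolic equation
$$\partial_t\dot\varphi=g^{i\bar j}\partial_i\partial_{\bar j}\dot\varphi+F_\varphi(\varphi,z)\,\dot\varphi,$$
from which $\sup|\dot\varphi|(t)\leq e^{Ct}\sup|\dot\varphi_0|$ by the maximum principle.

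The $C^2$ estimate is the first place the torsion of $\hat\omega$ enters nontrivially. Setting $g=\hat g+\sqrt{-1}\partial\bar\partial\varphi$ and following Gill and Tosatti--Weinkove, I compute $(\partial_t-g^{i\bar j}\partial_i\partial_{\bar j})\log\tr_{\hat g} g$; after the usual good gradient term is identified, the remainder is bounded by $C(\tr_\omega\hat g+1)$ with $C$ absorbing torsion and curvature of $\hat\omega$. Coupling this with $(\partial_t-g^{i\bar j}\partial_i\partial_{\bar j})\varphi=\dot\varphi-n+\tr_\omega\hat g$ and applying the maximum principle to
$$Q=t\log\tr_{\hat g} g-A\varphi$$
with $A$ large gives $\tr_{\hat g} g\leq C(t)$ with $C(t)\to\infty$ as $t\to 0$. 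Together with the $\dot\varphi$ bound, which controls $\det g$ from above and below, this produces two-sided eigenvalue bounds for $g$ relative to $\hat g$, so that the flow becomes uniformly parabolic on $[t_0,T]$ for each $t_0>0$.

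For the $C^3$ bound I would use the Calabi-type argument of Phong--\u{S}e\u{s}um--Sturm as reworked in the Hermitian setting by Sherman--Weinkove, applying the maximum principle to $t^2 S+B\tr_{\hat g} g$ for large $B$, where $S=|\nabla g|^2_g$; the extra terms generated by $\nabla T$ and the Chern curvature of $\hat\omega$ are controlled via the commutator formulas (2.1) and the torsion identities (2.2) together with the $C^2$ bound, and the bad $\tr_\omega\hat g$-type terms are absorbed into $B\tr_{\hat g} g$. A parallel bound on $|\Ric|$ of $g$ is obtained by differentiating (3.1) and applying (2.1), with the $|\nabla\Ric|$ pieces from the non-K\"ahler Bianchi identities of (2.2) handled explicitly. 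Once these estimates are in place, standard parabolic Schauder theory bootstraps to $C^k$ for every $k$, the time-dependent constants being folded into $C_k(t)$. I expect the main obstacle to lie in this last bundle of Hermitian estimates: the failure of the K\"ahler Bianchi identities forces each application of the maximum principle to be accompanied by a carefully chosen correction term (a multiple of $\tr_{\hat g} g$, $|T|^2_{\hat g}$ or similar) in order to cancel the torsion-generated errors cleanly.
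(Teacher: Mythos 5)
Your overall architecture (maximum principle for $C^0$ and $\dot\varphi$, Gill/Tosatti--Weinkove for $C^2$, a Calabi-type quantity for $C^3$, a separate $|\Ric|$ bound, then parabolic Schauder bootstrap) agrees with the paper's. But there is a genuine gap: you skip the gradient estimate entirely, and this breaks the $C^2$ step. In the course of computing $(\partial_t-\Delta)\log\tr_{\hat g}g$, the term $\Delta_{\hat g}\dot\varphi$ produces $F''(\varphi,z)|\nabla\varphi|^2_{\hat g}$ (already present in the K\"ahler case of Sz\'ekelyhidi--Tosatti), and the Hermitian torsion produces further $|\nabla\varphi|$-linear contributions; these cannot be dominated by $C(\tr_\omega\hat g+1)$, as you claim, nor by the $A\tr_\omega\hat g$ term coming from $-A\Delta\varphi$. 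Your test function $t\log\tr_{\hat g}g - A\varphi$ has no negative $|\nabla\varphi|^2$ term to absorb them (one would need the Tosatti--Weinkove exponential weight $e^{A(\sup\varphi-\varphi)}$ for that), so the argument does not close. The paper instead first proves a B{\l}ocki-type gradient bound $|\nabla\varphi|^2_{\hat g}<e^{\alpha/t}$ (Lemma 3.3), using the Guan--Li coordinate normalization $\hat g_{i\bar j}=\delta_{ij}$, $\partial_j\hat g_{i\bar i}=0$ to tame the $\partial\hat g$ torsion terms, and then feeds it into the $C^2$ estimate.

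Related to this, your choice of time-weights is not of the right form. Because the gradient bound blows up like $e^{\alpha/t}$ as $t\to 0$, a polynomial factor of $t$ in front of $\log\tr_{\hat g}g$ cannot suppress the gradient-driven terms; the paper takes $e^{-\alpha/t}\log\tr_{\hat g}g + e^{A(\sup\varphi-\varphi)}$, producing the bound $\tr_{\hat g}g<e^{Ce^{\alpha/t}}$. The same cascade propagates: for the $C^3$ and $|\Ric|$ estimates the paper does not use $t^2 S+B\tr_{\hat g}g$ but rather quantities built from time-dependent normalizations $C_i(t)=Le^{\lambda Ce^{\alpha/t}}$, namely $H=S/(C_1(t)-\tr_{\hat g}g)^2+\tr_{\hat g}g/C_2(t)+|\nabla\varphi|^2_{\hat g}/C_3(t)$ and $H=|\Ric|/C_1(t)+S/C_2(t)$, with the $C_i(t)$ chosen so that the negative $S^2$, $|\nabla\Phi|^2$, $|\varphi_{ij}|^2$ and $|\nabla\Ric|^2$ terms dominate the positive ones after Cauchy--Schwarz at the critical point. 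Your $t^2 S+B\tr_{\hat g}g$ with constant $B$ would face the same difficulty. If you insert the missing $C^1$ lemma and switch to doubly-exponential weights keyed to its blow-up rate, your scheme lines up with the paper's.
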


We write $g$ for the metric associated to $\omega=\hat{\omega} + \sqrt{-1}\partial \bar{\partial}\varphi$, where $\hat{\omega}$ is the background Hermitian metric on a compact complex manifold $M$. Denote $|\cdot|$ the norm of tensors with respect to $g$, $\nabla$ the Chern connection of $g$ and $\Delta=g^{p\bar{q}}\nabla_p\nabla_{\bar{q}}$ the Laplacian of $\nabla$. We use $\hat{\nabla},\ \hat{R}_{i\bar{j}},\ |\cdot|_{\hat{g}},\ \Delta_{\hat{g}}$ , etc. to denote the quantities associated to $\hat{\omega}$. Throughout the section, $C, C',c,c_i,...$ will be some constants which depend only on $\sup|\varphi_0|$, $\sup|\dot{\varphi}_0|$ (and $\hat{\omega}, F$), and may vary from line to line. Also we may denote $H$ to be different \vspace{1mm} quantities.\\

First we have the following lemma from [18].
\begin{lem}
There exist $T, C>0$ depending on $\sup|\varphi_0|$ such that
\vspace{1.6mm}
\begin{align}
|\varphi(t)|<C, \ \ \ |\dot{\varphi}(t)|\leq \sup|\dot{\varphi}(0)|e^{Ct},
\end{align}
when the solution exists and $t\leq T$. In particular,
\begin{align}
|\log\frac{(\hat{\omega} + \sqrt{-1}\partial \bar{\partial}\varphi)^n}{\hat{\omega}^n}|<C'
\end{align}
 for some $C'$ depending on $\sup|\varphi_0|$ and $ \sup|\dot{\varphi}_0|$.
\end{lem}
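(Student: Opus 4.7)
The plan is to obtain the three bounds in succession from scalar maximum principles applied directly to the flow (3.1). Since all quantities involved are scalar and only the $\partial\bar\partial$-Laplacian $\Delta=g^{i\bar{j}}\partial_i\partial_{\bar{j}}$ enters, the argument does not see the torsion of $\hat{\omega}$, so it essentially mirrors the K\"ahler case of [18]. I would not expect any genuine analytic difficulty; the Hermitian setting only forces careful bookkeeping of the range on which $F$ and $F_\varphi$ are controlled.

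For the $C^0$ bound, set $K(R):=\sup_{|s|\leq R,\,z\in M}|F(s,z)|$, which is finite for every $R>0$ by smoothness of $F$. At a spatial maximum of $\varphi(\cdot,t)$ one has $\sqrt{-1}\partial\bar{\partial}\varphi\leq 0$, hence $\log(\omega^n/\hat{\omega}^n)\leq 0$, and (3.1) gives
\begin{align*}
\frac{d}{dt}\sup_M\varphi(t)\leq F\bigl(\sup_M\varphi(t),\cdot\bigr)\leq K\bigl(\sup_M|\varphi(t)|\bigr),
\end{align*}
with the symmetric estimate holding at a spatial minimum. Comparison with the scalar ODE $y'=K(y)$, $y(0)=\sup|\varphi_0|$, then produces $T,C>0$ depending only on $\sup|\varphi_0|$ for which $|\varphi(t)|<C$ on $[0,T]$. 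For the bound on $\dot{\varphi}$, I would differentiate (3.1) in $t$ to obtain
\begin{align*}
\left(\frac{\partial}{\partial t}-\Delta\right)\dot{\varphi}=F_\varphi(\varphi,z)\,\dot{\varphi}.
\end{align*}
The previous step confines $\varphi$ to a compact range on which $|F_\varphi|$ is bounded by some constant $C$; applying the parabolic maximum principle to $e^{-Ct}\dot{\varphi}$ and $-e^{-Ct}\dot{\varphi}$ then yields $|\dot{\varphi}(t)|\leq\sup|\dot{\varphi}(0)|e^{Ct}$. The bound on $\log(\omega^n/\hat{\omega}^n)$ is immediate, since (3.1) reads $\log(\omega^n/\hat{\omega}^n)=\dot{\varphi}-F(\varphi,z)$ and both terms on the right are now controlled.

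The only point requiring care is that $K(R)$ and $\sup|F_\varphi|$ depend on an a priori bound for $|\varphi|$, so strictly speaking one should argue on the maximal time interval where $|\varphi|\leq\sup|\varphi_0|+1$, show by the preceding ODE comparison that this interval contains a uniform $[0,T]$, and then absorb the resulting constants. This is a routine continuity/bootstrap argument and is not a serious obstacle; the hardest new work in the paper, by contrast, lies in the higher-order estimates of the subsequent sections, where the torsion of $\hat{\omega}$ genuinely intervenes.
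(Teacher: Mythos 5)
Your proposal is correct and coincides with the paper's route: the paper simply observes that the proof of \cite{} Lemma 2.1 (maximum principle at spatial extrema of $\varphi$ via $\sqrt{-1}\partial\bar\partial\varphi \leq 0$, then differentiating the flow in $t$ and applying the maximum principle to $e^{-Ct}\dot\varphi$, then reading off the bound on $\log(\omega^n/\hat\omega^n)$ from the equation) carries over verbatim because no Kähler condition or torsion term enters; this is exactly the argument you give. Your final caveat about the range on which $K(R)$ and $\sup|F_\varphi|$ are controlled is handled cleanly by the ODE comparison you already invoke, so there is no gap.
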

The proof follows from [18, Lemma 2.1] as it does not need the K\"{a}hler condition.
Now we can fix a $T'\leq T$ such that there exists a smooth\vspace{1.2mm} solution to $(3.1)$ on $[0,T']$.

The $C^1$ estimate in [18] was obtained by modifying B{\l}ocki's estimate [2] (see also [7], [15]). In Hermitian case, we need the following special local coordinate system from Guan-Li [6], which is also crucial for our second order estimate.
\begin{lem}
Around a point $p\in M$, there exist local coordinates such that at $p$,
\begin{align}
\hat{g}_{i\bar{j}}=\delta_{ij}, \ \ \ \ \frac{\partial \hat{g}_{i\bar{i}}}{\partial z_j}=0.
\end{align}
\end{lem}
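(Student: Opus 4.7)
The plan is to apply two successive holomorphic changes of coordinates around $p$. A linear change can diagonalize the positive Hermitian matrix $\hat{g}_{i\bar{j}}(p)$, so I may assume at the outset that holomorphic coordinates $z^i$ centered at $p$ are chosen with $\hat{g}_{i\bar{j}}(p) = \delta_{ij}$. The remaining task is to adjust the first-order terms of the metric by a further holomorphic change preserving this normalization.

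For this I would use the quadratic ansatz $w^i = z^i + \tfrac{1}{2} a^i_{jk}\,z^j z^k$ with coefficients symmetric in the lower indices, $a^i_{jk}=a^i_{kj}$, to be chosen. Since the Jacobian at $p$ is the identity, the normalization $\tilde{g}_{i\bar{j}}(p) = \delta_{ij}$ is preserved. Inverting the change of coordinates to second order yields $z^p = w^p - \tfrac{1}{2}a^p_{jk}w^j w^k + O(|w|^3)$, so that the Hessian $\partial^2 z^p/\partial w^k \partial w^i$ at $p$ equals $-a^p_{ki}$. Differentiating the transformation law
$$\tilde{g}_{i\bar{j}}(w) = \frac{\partial z^p}{\partial w^i}\,\overline{\frac{\partial z^q}{\partial w^j}}\,\hat{g}_{p\bar{q}}(z(w))$$
in $w^k$ at $p$, and using that $\overline{\partial z^q/\partial w^j}$ is antiholomorphic in $w$ (so one of the three natural product-rule terms vanishes), one obtains
$$\frac{\partial \tilde{g}_{i\bar{j}}}{\partial w^k}(p) = \frac{\partial \hat{g}_{i\bar{j}}}{\partial z^k}(p) - a^j_{ik}.$$

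Specializing to $j=i$, the lemma reduces to finding coefficients $a^i_{ik}$ satisfying the $n^2$ scalar equations $a^i_{ik} = \partial_k \hat{g}_{i\bar{i}}(p)$ for all $i,k$. These are trivially compatible with the required symmetry $a^i_{jk}=a^i_{kj}$, since the conditions for different upper index $i$ constrain disjoint entries of the array $a$, and the remaining components may simply be set to zero. No serious obstacle arises here; the conceptual point worth noting is the contrast with the K\"ahler setting, where full normal coordinates would demand the overdetermined system $a^j_{ik} = \partial_k \hat{g}_{i\bar{j}}(p)$ for every triple $(i,j,k)$, implicitly forcing the K\"ahler identity $\partial_k \hat{g}_{i\bar{j}} = \partial_i \hat{g}_{k\bar{j}}$. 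Restricting to the diagonal subset as above is precisely the compromise Guan-Li exploit, and it leaves the symmetric quadratic ansatz amply underdetermined.
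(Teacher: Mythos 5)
Your argument is correct, and it is essentially the standard Guan--Li argument that the paper cites to [6] without reproducing. The computation of $\partial_{w^k}\tilde{g}_{i\bar{j}}(p) = \partial_{z^k}\hat{g}_{i\bar{j}}(p) - a^j_{ik}$ is right (the conjugate Jacobian factor is antiholomorphic, so it contributes no term, and the chain-rule term through $\bar{z}$ likewise drops), and your observation that for each fixed upper index $i$ the constraints $a^i_{ik} = \partial_k\hat{g}_{i\bar{i}}(p)$ only touch the $i$th row and column of the symmetric matrix $(a^i_{jk})_{jk}$ — hence are always solvable, in contrast to the overdetermined K\"ahler normal-coordinate system — is exactly the point of the lemma.
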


With the above lemma, we have the following gradient estimate.

\begin{lem}
There exists $\alpha>0$ depending on $\sup|\varphi_0|$ and $\sup|\dot{\varphi}_0|$ such that
\begin{align}
|\nabla\varphi(t)|^2_{\hat{g}}<e^{\alpha/t},
\end{align}
for $t\leq T'$.
\end{lem}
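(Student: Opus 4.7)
My approach is a parabolic adaptation of B\l ocki's gradient estimate [2], carried out in the Guan--Li coordinates of Lemma 3.3. Writing $\eta := |\nabla\varphi|^2_{\hat g}$, I will apply the maximum principle to the test function
\[
Q := t\,\log\eta - A\,\varphi
\]
on $[0,T']\times M$, where $A>0$ is a large constant to be chosen. A uniform upper bound $Q\le C_1$ yields at once $t\log\eta \le C_1 + A\sup|\varphi|$, which after relabelling gives $|\nabla\varphi|^2_{\hat g}\le e^{\alpha/t}$ with $\alpha$ depending only on $\sup|\varphi_0|$, $\sup|\dot\varphi_0|$, $\hat\omega$, and $F$. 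If the space-time maximum occurs at $t=0$ then the factor $t$ kills the log term and $Q(0,\cdot)\le A\sup|\varphi|$, with no dependence on $\hat\nabla\varphi_0$; otherwise the maximum is attained at some interior $(t_0,p)$ with $t_0>0$, where $\partial_t Q\ge 0$, $\Delta Q\le 0$, and
\[
\nabla Q = 0 \quad\Longleftrightarrow\quad \nabla\eta = \frac{A\eta}{t_0}\,\nabla\varphi \ \text{at } p,
\]
which will be used to trade $|\nabla\eta|^2/\eta^2$ for $(A/t_0)^2|\nabla\varphi|_g^2$, controllable by AM--GM in terms of $\operatorname{tr}_g\hat g$.

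The main computation is $(\partial_t-\Delta)Q$ at $(t_0,p)$, carried out in Guan--Li coordinates with $\hat g_{i\bar j}(p)=\delta_{ij}$ and $\partial_k\hat g_{i\bar i}(p)=0$. Using $\Delta\log\eta = (\Delta\eta)/\eta - |\nabla\eta|^2/\eta^2$, it suffices to bound $(\partial_t-\Delta)\eta$. Differentiating (3.1) and applying the commutation relations (2.1), this evolution equation has a manifestly negative leading piece
\[
-\,g^{p\bar q}\hat g^{i\bar j}\bigl(\nabla_p\partial_i\varphi\,\nabla_{\bar q}\partial_{\bar j}\varphi + \nabla_p\partial_{\bar j}\varphi\,\nabla_{\bar q}\partial_i\varphi\bigr),
\]
together with (i) background curvature contributions from $\hat g$, (ii) Hermitian torsion terms coupling $\hat T$ to $\hat\nabla\varphi$ and $\nabla\bar\nabla\varphi$, and (iii) derivatives $\hat\nabla\hat T$ arising from (2.1). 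The Guan--Li condition $\partial_k\hat g_{i\bar i}(p)=0$ annihilates the worst components of (i) and (iii); the torsion cross-terms in (ii) absorb into the good Hessian piece via Cauchy--Schwarz at the cost of a bounded multiple of $(\eta+1)\operatorname{tr}_g\hat g$. Meanwhile, since $\Delta\varphi = n - \operatorname{tr}_g\hat g$,
\[
-A(\dot\varphi - \Delta\varphi) = -A\log\frac{\omega^n}{\hat\omega^n} - AF + An - A\operatorname{tr}_g\hat g,
\]
and the first two terms are bounded by Lemma 3.1. Combining everything, at $(t_0,p)$ one arrives at
\[
0 \le (\partial_t-\Delta)Q \le \log\eta + C(1+A) - (A - C_0)\operatorname{tr}_g\hat g .
\]
AM--GM applied to the bound $|\log(\omega^n/\hat\omega^n)|\le C'$ from Lemma 3.1 yields $\operatorname{tr}_g\hat g\ge c>0$, so taking $A$ sufficiently large forces $\log\eta$, and hence $Q(t_0,p)$, to be bounded by a constant with the required dependencies.

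The main obstacle, and the genuinely new content relative to the K\"ahler case of [18], is the bookkeeping of torsion in the preceding evolution. On a Hermitian background, cross-terms of schematic form $\hat T \ast \hat\nabla\varphi \ast \nabla\bar\nabla\varphi$ survive in $(\partial_t - \Delta)\eta$, and additional $\hat\nabla\hat T$ contributions appear through (2.1) when commuting $\nabla_i$ past $\nabla_{\bar j}$. This is precisely why Lemma 3.3 is indispensable: the Guan--Li coordinates zero out exactly the background Christoffel terms that would otherwise resist absorption, after which the maximality relation $\nabla\eta = (A\eta/t_0)\nabla\varphi$ together with Cauchy--Schwarz close the argument and allow a clean choice of $A$.
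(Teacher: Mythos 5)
Your overall skeleton (maximum principle for $t\log|\nabla\varphi|_{\hat g}^2-\gamma(\varphi)$, Guan--Li normal coordinates, use of the first-order critical condition to handle $|\nabla\eta|^2/\eta^2$) matches the paper's strategy, but there are two linked gaps that break the argument as you have set it up.

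\textbf{The choice $\gamma(x)=Ax$ is not enough.} The paper takes $\gamma(x)=Ax-\tfrac1Ax^2$, and the concavity $\gamma''<0$ is not cosmetic: after B{\l}ocki's re-insertion of the critical-point relation into the good Hessian piece, the only surviving negative coefficient in front of $\sum_i\frac{|\varphi_i|^2}{1+\varphi_{i\bar i}}$ is $\gamma''$. With your linear $\gamma$ this coefficient is zero, and one only obtains a bound on $\sum_i\frac{1}{1+\varphi_{i\bar i}}=\operatorname{tr}_g\hat g$. That bounds each $1+\varphi_{i\bar i}$ (via (3.3) and AM--GM), but gives no way to pass to a bound on $\rho=\sum_i|\varphi_i|^2$ itself. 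The paper's conclusion $\rho\le nc(c'\log\rho)^n$ precisely uses \emph{both} $\sum_i\frac{|\varphi_i|^2}{1+\varphi_{i\bar i}}\le c'\log\rho$ and $\sum_i\frac{1}{1+\varphi_{i\bar i}}\le c'\log\rho$; you lose the first.

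\textbf{The handling of $t|\nabla\eta|^2/\eta^2$ and the final step are wrong.} At the interior maximum, $\nabla\eta=\frac{A\eta}{t_0}\nabla\varphi$ gives $t_0|\nabla\eta|^2/\eta^2=\frac{A^2}{t_0}\sum_i\frac{|\varphi_i|^2}{1+\varphi_{i\bar i}}$, a positive (bad) term of size $\frac{A^2}{t_0}\eta$ in the worst case. This is not ``controllable by AM--GM in terms of $\operatorname{tr}_g\hat g$''; it must be cancelled exactly by re-substituting the critical condition into the $\frac{t}{\rho}\sum_{i,k}\frac{|\varphi_{ki}-\cdots|^2}{1+\varphi_{i\bar i}}$ piece, yielding $\ge\frac{(\gamma')^2}{t}\sum_i\frac{|\varphi_i|^2}{1+\varphi_{i\bar i}}-2\gamma'$ --- the B{\l}ocki trick, which your outline does not implement. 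Finally, your claimed inequality $0\le\log\eta+C(1+A)-(A-C_0)\operatorname{tr}_g\hat g$, combined with $\operatorname{tr}_g\hat g\ge c>0$, produces a \emph{lower} bound $\log\eta\ge(A-C_0)c-C(1+A)$, not an upper bound; the concluding sentence ``taking $A$ sufficiently large forces $\log\eta$ to be bounded'' does not follow. You need the two-term bound noted above, together with (3.3), to close.
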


\begin{proof} Define $$H=t\log|\nabla\varphi(t)|^2_{\hat{g}}-\gamma(\varphi),$$ where $\gamma$ is a smooth function which will be determined later. If $H$ achieves maximum on $[0,T']\times M$ at $t=0$, then $H$ is bounded by a constant depending on $F$ and $\sup |\varphi_0|$ by Lemma 3.1.
 Now assume $H$ achieves its maximum at a point $(t_0, z_0)$, $t_0>0$. Choose a coordinate system around $z_0$ in Lemma 3.2 such that $\varphi_{i\bar{j}}$ is diagonal at $z_0$. Write $\rho=|\nabla\varphi(t)|^2_{\hat{g}}=\hat{g}^{i\bar{j}}\varphi_i\varphi_{\bar{j}}$ and $\dot{\rho}=\frac{\partial\rho}{\partial t}$. As $(\frac{\partial}{\partial t}-\Delta )H
= \frac{\partial}{\partial t}H-t\frac{\Delta  \rho}{\rho}+t\frac{|\nabla \rho|^2}{\rho^2}+\Delta \gamma$, we do\vspace{.6mm} the following calculations at $z_0$. First we have
\begin{align*} \frac{\partial }{\partial t} H &=\log \rho+\frac{t \dot{\rho}}{\rho}-\gamma ' \dot{\varphi}\\
&=\log \rho-\gamma ' \dot{\varphi}+ 2\frac{t}{\rho} (\sum_{i,k}\re(\frac {\varphi_{ki\bar{i}}\varphi_{\bar{k}}} {1+\varphi_{i\bar{i}}})+2F' \sum_i|\varphi_i|^2+2\sum_i\re(F_i \varphi_i)),
\end{align*}
where the second equality follows from
\begingroup
\addtolength{\jot}{.3em}
\begin{align*}
\dot{\rho}&=\sum_i\dot{\varphi}_i\varphi_{\bar{i}}+\varphi_i\dot{\varphi}_{\bar{i}}\\
\dot{\varphi_i}&=g^{k\bar{l}}(\partial_i \hat{g}_{k\bar{l}}+\varphi_{i\bar{l}k})-\hat{g}^{k\bar{l}}\partial_i \hat{g}_{k\bar{l}}+F'\varphi_i+F_i\\
&=\sum_k\frac{\varphi_{ik\bar{k}}}{1+\varphi_{k\bar{k}}}+F'\varphi_i+F_i.\end{align*}\endgroup
Here $F'$ is the derivative in the $\varphi$ direction. Also, using $\partial_i\partial_{\bar{i}}\hat{g}^{k\bar{l}}=-\partial_i\partial_{\bar{i}} \hat{g}_{l\bar{k}}+\sum_q
\partial_i \hat{g}_{q\bar{k}}\partial_{\bar{i}} \hat{g}_{l\bar{q}}+\sum_p
\partial_i \hat{g}_{l\bar{p}}\partial_{\bar{i}} \hat{g}_{p\bar{k}}$, \vspace{-1.5mm}
we get
\begin{align*}
\Delta \rho &=g^{i\bar{i}}\partial_i \partial_{\bar{i}}(\hat{g}^{k\bar{l}}\varphi_k\varphi_{\bar{l}})\\
&=\sum_{i,k}\frac{1}{1+\varphi_{i\bar{i}}}(-\sum_l\partial_i \partial_{\bar{i}} \hat{g}_{l\bar{k}}\varphi_k\varphi_{\bar{l}}+2\re(\varphi_{k\bar{i}i}\varphi_{\bar{k}})\\
&\ \ \ +|\varphi_{k\bar{i}}-\sum_l\partial_{\bar{i}}\hat{g}_{\bar{l}k}\varphi_l|^2
+|\varphi_{ki}-\sum_l\partial_{i}\hat{g}_{k\bar{l}}\varphi_l|^2).
\end{align*}
At $(t_0,z_0)$, $\nabla H=0$\vspace{-2mm} gives
\begin{align}H_i=\frac{t}{\rho}\rho_i-\gamma'\varphi_i=0.\end{align}
The\vspace{-1mm}n
$$\frac{|\nabla \rho|^2}{\rho^2}=\sum_i \frac{1}{1+\varphi_{i\bar{i}}}(\frac{\gamma'}{t})^2|\varphi_i|^2.$$
Also $\Delta \gamma (\varphi)=
\underset{i}{\sum}\frac{1}{1+\varphi_{i\bar{i}}}(\gamma''|\varphi_i|^2+\gamma'\varphi_{\bar{i}i}).$
Therefore\vspace{-1mm} we get
\begin{align*}
0 \leq\  &(\frac{\partial}{\partial t}-\Delta )H\\
=\ &\frac{\partial}{\partial t}H-t\frac{\Delta  \rho}{\rho}+t\frac{|\nabla \rho|^2_{g}}{\rho^2}+\Delta  \gamma\\
\leq\ & \log \rho- \gamma' \dot{\varphi} +ct-\sum_{i,k}\frac{t}{\rho}\frac{1}{1+\varphi_{i\bar{i}}}(|\varphi_{k\bar{i}}-\sum_l\partial_{\bar{i}}\hat{g}_{\bar{l}k}\varphi_l|^2
+|\varphi_{ki}-\sum_l\partial_{i}\hat{g}_{k\bar{l}}\varphi_l|^2)\\
&+\sum_i\frac{|\varphi_i|^2}{1+\varphi_{i\bar{i}}}(\frac{(\gamma ')^2}{t}+\gamma'' )+ \sum_i\frac{c_1 t-\gamma'}{1+\varphi_{i\bar{i}}}
+n\gamma'+\frac{c_2t}{\rho}
\end{align*}
Now we use the same trick in [2, 18] to control the term containing $\gamma'^2$. From (3.6) we get
\begin{align*}
\gamma'\rho\varphi_i=t\rho_i=t(\varphi_i\varphi_{i\bar{i}}+\sum_k\varphi_{ki}\varphi_{\bar{k}}-\sum_{k,l}\partial_i \hat{g}_{l\bar{k}}\varphi_k\varphi_{\bar{l}})
 \end{align*}
which gives  $$\sum_k(\varphi_{ki}-\sum_l\partial_{i}\hat{g}_{k\bar{l}}\varphi_l)\varphi_{\bar{k}}=t^{-1}\gamma' \rho \varphi_i-\varphi_i\varphi_{i\bar{i}}.$$
So
\begin{align*}
\frac{t}{\rho}\sum_{i,k}\frac{|\varphi_{ki}-\sum_l\partial_{i}\hat{g}_{k\bar{l}}\varphi_l|^2}{1+\varphi_{i\bar{i}}}&\geq \frac{t}{\rho^2}\sum_i\frac{|\sum_k(\varphi_{ki}-\sum_l\partial_{i}\hat{g}_{k\bar{l}}\varphi_l)\varphi_{\bar{k}}|^2}{1+\varphi_{i\bar{i}}}\\
&=\frac{t}{\rho^2}\sum_i\frac{ |t^{-1}\gamma' \rho \varphi_i-\varphi_i\varphi_{i\bar{i}}|^2}{1+\varphi_{i\bar{i}}}\\
&\geq \frac{(\gamma')^2}{t}\sum_i\frac{|\varphi_i|^2}{1+\varphi_{i\bar{i}}}-2\gamma'
\end{align*}
where we assume $\gamma'>0$. As $\dot{\varphi}$ is bounded from Lemma 3.1 for $t\leq T'$, the above estimates gives
$$0 \leq \log \rho+ct+\sum_i\frac{\gamma'' |\varphi_i|^2}{1+\varphi_{i\bar{i}}}+ \sum_i\frac{c_1 t-\gamma'}{1+\varphi_{i\bar{i}}}
+(n+2+c)\gamma'+\frac{c_2t}{\rho}$$
Take $\gamma(x)=Ax-\frac{1}{A}x^2$. Assume that $\log \rho\geq 1$ at $ (t_0, z_0)$ and choose $A$ to be sufficiently large, then we get $$\sum_i\frac{|\varphi_i|^2}{1+\varphi_{i\bar{i}}}+ \sum_i\frac{1}{1+\varphi_{i\bar{i}}}\leq c' \log \rho$$ for some constan $c'$. The above inequality together with $(3.3)$ imply that
 $$ 1+\varphi_{i\bar{i}}\leq c(c'\log\rho)^{n-1},$$ Then we have $$\rho=\sum_i|\varphi_i|^2\leq nc(c'\log\rho)^n,$$ which shows that $\rho$ is bounded at $(t_0, z_0)$. Therefore $H$  has a bound depending only on $\sup|\varphi_0|$, $\sup|\dot{\varphi}_0|$ and the estimate $(3.5)$ follows.
\end{proof}
Now we will give the second order estimate.  We use the idea of [6, 20] and follow the argument in [5] closely. For local computations in the proof of the following proposition, we always use a coordinate system in Lemma 3.2 at a point $p$, such that  $\hat{g}_{i\bar{j}}=\delta_{ij}, \ \frac{\partial \hat{g}_{i\bar{i}}}{\partial z_j}=0.$ and $\varphi_{i\bar{j}}$ is diagonal.
\begin{prop}
There exists $C>0$ depending on $\sup|\varphi_0|$ and $\sup|\dot{\varphi}_0|$ such that
\begin{align}
\tr _{\hat{g}}g=n+\Delta \varphi(t)<e^{Ce^{\alpha/t}}
\end{align}
for \vspace{-1mm} $t\leq T'$, where $\alpha$ is the same as in Lemma $3.3$.
\end{prop}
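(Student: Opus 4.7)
The plan is to apply the parabolic maximum principle to an auxiliary function of the form
$$H = \log(\tr_{\hat g}g) - A\varphi$$
on $[0,T']\times M$, where $A$ is a large quantity to be chosen; since the gradient bound of Lemma 3.3 itself blows up like $e^{\alpha/t}$, $A$ will need to be of that order. This is the classical Yau second-order estimate, adapted to the Hermitian setting following Gill [5] and Tosatti--Weinkove [20].

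At a maximum point $(t_0,z_0)$ of $H$ with $t_0>0$, I would pick the Guan--Li coordinates of Lemma 3.2, further unitarily diagonalising $\varphi_{i\bar j}$ at $z_0$. The time derivative contributes $\partial_t\tr_{\hat g}g=\Delta_{\hat g}\dot\varphi$. For the spatial Laplacian, commuting $\partial_k\partial_{\bar l}$ through $\hat g^{i\bar j}$ and using the identity $g^{k\bar l}\partial_k\partial_{\bar l}g_{i\bar j}=-\partial_i\partial_{\bar j}\log\det g+g^{k\bar l}g^{p\bar q}\partial_k g_{i\bar q}\partial_{\bar l}g_{p\bar j}$, together with the Hermitian Bianchi identities (2.2), the equation $\log\det g=\log\det\hat g+\dot\varphi-F(\varphi,z)$ cancels the bad $\partial_i\partial_{\bar j}\dot\varphi$ term against the time derivative. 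What remains is a sum of a Schwarz good term $g^{k\bar l}g^{p\bar q}\nabla_k g_{i\bar q}\nabla_{\bar l}g_{p\bar j}$, a curvature-of-$\hat g$ contribution $\le C\tr_g\hat g$, first-order terms in $\nabla\varphi$ and $\nabla F$, and -- crucially in the Hermitian case -- torsion-generated leftovers of schematic form $T*\nabla g$ and $\hat\nabla T*\nabla\varphi$.

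Cauchy--Schwarz converts the $T*\nabla g$ pieces into $\epsilon\, g^{k\bar l}g^{p\bar q}|\nabla g|^2+\epsilon^{-1}C\tr_g\hat g$, the first being absorbed by the Schwarz good term (which at the maximum is $A^2|\nabla\varphi|^2/\tr_{\hat g}g$ after using $\nabla H(z_0)=0$). The $\hat\nabla T*\nabla\varphi$ pieces contribute $C|\nabla\varphi|_{\hat g}^2\tr_g\hat g/\tr_{\hat g}g$. Adding $(\partial_t-\Delta)(-A\varphi)=-A\dot\varphi+An-A\tr_g\hat g$, and invoking Lemma 3.1 to bound $\dot\varphi$ and Lemma 3.3 to bound $|\nabla\varphi|^2_{\hat g}\le e^{\alpha/t}$, one obtains
$$0\le -\bigl(A-C-Ce^{\alpha/t_0}/\tr_{\hat g}g\bigr)\tr_g\hat g + An + C$$
at $(t_0,z_0)$. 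Choosing $A=1+C+Ce^{\alpha/t_0}$ forces $\tr_g\hat g\le C'e^{\alpha/t_0}$. The Monge--Amp\`ere equation gives $\det g/\det\hat g=e^{\dot\varphi-F}$ bounded, and the AM--GM inequality $\tr_{\hat g}g\le c(\tr_g\hat g)^{n-1}\cdot(\det g/\det\hat g)$ then yields $\tr_{\hat g}g\le e^{Ce^{\alpha/t_0}}$ at the maximum, which by definition of $H$ and the oscillation bound on $\varphi$ propagates to (3.7) everywhere on $[0,T']\times M$.

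The main obstacle is managing the torsion terms produced by (2.2): the absence of standard Bianchi identities generates extra contributions linear in $T$, $\hat\nabla T$, $\nabla g$ and $\nabla\varphi$ that are invisible in the K\"ahler case. Organising them via Cauchy--Schwarz so that only the Schwarz good term and a uniform multiple of $\tr_g\hat g$ survive -- while keeping the residual coefficient of $|\nabla\varphi|^2_{\hat g}$ small enough for $A=O(e^{\alpha/t})$ to suffice -- is the delicate technical core. A subsidiary subtlety is the time-dependence of $A$, which can be handled either by adjoining an auxiliary term $Bf(t)$ with $f(t)\to+\infty$ as $t\to 0^+$ to $H$, or by applying the argument at a fixed candidate maximum time.
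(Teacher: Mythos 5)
Your strategy is in the right family (Gill, Tosatti--Weinkove, Yau), but the test function you propose is genuinely different from the paper's, and the difference matters in the Hermitian setting. The paper takes
$$H = e^{-\alpha/t}\log\tr_{\hat g}g + e^{\Psi},\qquad \Psi = A\bigl(\sup\varphi-\varphi\bigr),\ A\ \text{constant},$$
not $\log\tr_{\hat g}g-A\varphi$ with a huge time-dependent $A$, and both features you are missing do real work.

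First, the Phong--Sturm exponential $e^\Psi$ contributes the extra good term $-A^2|\nabla\varphi|^2e^\Psi$ to $(\partial_t-\Delta)H$. In the Hermitian case $\partial_i\tr_{\hat g}g=\sum_j\partial_j g_{i\bar j}-\sum_j\partial_j\hat g_{i\bar j}$, and the second sum does \emph{not} vanish at $z_0$ even in Guan--Li coordinates. Expanding $|\partial\tr_{\hat g}g|^2/(\tr_{\hat g}g)^2$ therefore produces a cross term $-\tfrac{2}{(\tr_{\hat g}g)^2}\re\sum g^{i\bar i}\partial_j\hat g_{i\bar j}\partial_{\bar k}g_{k\bar i}$ invisible in the K\"ahler case. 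After inserting the critical equation $\nabla H(t_0,z_0)=0$, this yields a bad contribution of size $A^2|\nabla\varphi|^2e^\Psi$, which the paper cancels \emph{exactly} against the $-A^2|\nabla\varphi|^2e^\Psi$ from the exponential (see (3.13) and (3.15) feeding into (3.8)). Your linear $-A\varphi$ produces no such compensating term, and your parenthetical claim that the ``Schwarz good term\ldots at the maximum is $A^2|\nabla\varphi|^2/\tr_{\hat g}g$'' conflates the K\"ahler situation (where Yau's Cauchy--Schwarz (3.14) alone handles $|\partial\tr_{\hat g}g|^2/(\tr_{\hat g}g)^2$) with the Hermitian one (where the leftover cross term is precisely the issue). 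So the central Hermitian obstruction you flag in your last paragraph is not actually resolved by the plan you lay out.

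Second, the prefactor $e^{-\alpha/t}$ is what lets $A$ be a \emph{constant}. Making $A\sim e^{\alpha/t}$ time-dependent introduces the term $-(\partial_t A)\varphi\sim\tfrac{\alpha}{t^2}e^{\alpha/t}\varphi$ into $(\partial_t-\Delta)H$, which is unbounded and of indefinite sign as $t\to 0^+$. With the paper's $H$, the only time-derivative of a weight is $\partial_t(e^{-\alpha/t})\log\tr_{\hat g}g=\tfrac{\alpha}{t^2}e^{-\alpha/t}\log\tr_{\hat g}g$, which is bounded since $\tfrac{\alpha}{t^2}e^{-\alpha/t}$ is bounded and $\log\tr_{\hat g}g\le C+C\tr_g\hat g$. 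Moreover, the paper's $H$ being bounded by a constant at the maximum transmits directly to $\log\tr_{\hat g}g(t,z)\le Ce^{\alpha/t}$ for all $(t,z)$; with your $H$ the bound at the maximum depends on $t_0$, and the alternatives you mention ($Bf(t)$ auxiliary term, or freezing the time) are not carried out and do not obviously yield the $e^{\alpha/t}$--dependence required by (3.7). In short: the decomposition and maximum-principle machinery are the right tools, but the specific test function and the cancellation mechanism are missing, and these are exactly the Hermitian-specific content of the paper's Proposition 3.2.
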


\begin{proof}
Let $$ H=e^{-\frac{\alpha}{t}}\log \tr _{\hat{g}}g+e^\Psi,$$
where $\Psi=A(\underset{[0,T']\times M}{\sup \varphi}-\varphi)$ and $A$ is a constant to be chosen later. First we have \begingroup
\addtolength{\jot}{.1em}
\begin{align}
(\frac{\partial}{\partial t}-\Delta )H&= \frac{\alpha}{t^2}e^{-\frac{\alpha}{t}}\log\tr_{\hat{g}}g+\frac{e^{-\frac{\alpha}{t}}}{{\tr_{\hat{g}}g}}\Delta_{\hat{g}} \dot{\varphi}-Ae^{\Psi}\dot{\varphi}\nonumber \\
&-e^{-\frac{\alpha}{t}}\Delta \log \tr_{\hat{g}}g-A^2|\nabla{\varphi}|^2e^{\Psi}-A(\tr_g{\hat{g}}- n)e^{\Psi}.
\end{align}\endgroup
It follows from $(3.1)$ that
\begin{align}\Delta_{\hat{g}} \dot{\varphi}=-\tr_{\hat{g}}\Ric(g)+\tr_{\hat{g}}\Ric(\hat{g})+\Delta_{\hat{g}} F(\varphi,z)\end{align}
where $$\Delta_{\hat{g}} F(\varphi,z)=F''|\nabla \varphi|^2_{\hat{g}}+F'\Delta_{\hat{g}} \varphi+2\re(g^{i\bar{j}}F'_i \varphi_{\bar{j}})+\Delta_{\hat{g}} F.$$ Here $F'$ is the derivative in the $\varphi$ direction, $\Delta_{\hat{g}}$ is the complex Laplacian of $F$ in the $z$ variable. Use that
\begin{align*}\tr_{\hat{g}}\Ric(g)&=\sum_{i,k} g^{i\bar{i}}(-\partial_k \partial_{\bar{k}} g_{i\bar{i}}+g^{j\bar{j}}\partial_kg_{i\bar{j}}\partial_{\bar{k}} g_{j\bar{i}})\\
&=\sum_{i,k} g^{i\bar{i}}(-\varphi_{i\bar{i}k\bar{k}}-\partial_k \partial_{\bar{k}} \hat{g}_{i\bar{i}}+g^{j\bar{j}}\partial_kg_{i\bar{j}}\partial_{\bar{k}} g_{j\bar{i}})
\end{align*}
to rewrite $(3.9)$ as
\begin{align}
\sum_{i,k} g^{i\bar{i}}\varphi_{i\bar{i}k\bar{k}}&=-\sum_{i,k}g^{i\bar{i}}\partial_k \partial_{\bar{k}} {\hat{g}}_{i\bar{i}}+\sum_{i,j,k}g^{i\bar{i}}g^{j\bar{j}}\partial_kg_{i\bar{j}}\partial_{\bar{k}} g_{j\bar{i}}
+\Delta_{\hat{g}} \dot{\varphi}-\tr_{\hat{g}}\Ric(\hat{g})-\Delta_{\hat{g}} F(\varphi,z)\nonumber\\
&\geq \sum_{i,j,k}g^{i\bar{i}}g^{j\bar{j}}\partial_kg_{i\bar{j}}\partial_{\bar{k}} g_{j\bar{i}}
+\Delta_{\hat{g}} \dot{\varphi}-C_1 |\nabla\varphi|^2_{\hat{g}}-C_2\tr_{\hat{g}}g\tr_{g}\hat{g}.
\end{align}
From the bound in $(3.3)$, we have $\tr_{\hat{g}}g,\  \tr_g\hat{g}\geq C^{-1}$ for some constant $C$ and then $\tr_{\hat{g}}g,\ \tr_g\hat{g}\leq C\tr_g\hat{g}\tr_{\hat{g}}g$. We use these in the above inequality. Also we will use them frequently in the following.
As the estimates in [20,\ (2.6)], we have \begin{align}\Delta \tr_{\hat{g}}g\geq \sum_{i,k} g^{i\bar{i}}\varphi_{i\bar{i}k\bar{k}}-2\re(\sum_{i,j,k}g^{i\bar{i}}\partial_{\bar{i}} \hat{g}_{j\bar{k}}\varphi_{k\bar{j}i})-C\tr_{\hat{g}}g\tr_{g}\hat{g}. \end{align}
To control $\sum_{i,j,k}g^{i\bar{i}}\partial_{\bar{i}} \hat{g}_{j\bar{k}}\varphi_{k\bar{j}i}$, we use a trick from [6].
$$\sum_{i,j,k}g^{i\bar{i}}\partial_{\bar{i}} \hat{g}_{j\bar{k}}\varphi_{k\bar{j}i}=\sum_i\sum_{j\neq k}(g^{i\bar{i}}\partial_{\bar{i}} \hat{g}_{j\bar{k}}\partial_k g_{i\bar{j}}-g^{i\bar{i}}\partial_{\bar{i}} \hat{g}_{j\bar{k}}\partial_{k} \hat{g}_{i\bar{j}}).$$
so
\begin{align}
|2\re (\sum_{i,j,k} g^{i\bar{i}}\partial_i \hat{g}_{j\bar{k}}\varphi_{k\bar{j}i}|&\leq \sum_i\sum_{j\neq k} (g^{i\bar{i}}g^{j\bar{j}}\partial_kg_{i\bar{j}}\partial_{\bar{k}} g_{j\bar{i}}+g^{i\bar{i}}g_{j\bar{j}}\partial_{\bar{i}}\hat{g}_{j\bar{k}}\partial_i \hat{g}_{k\bar{j}})+C\tr_{g}\hat{g}\nonumber \\
&\leq \sum_i\sum_{j\neq k} g^{i\bar{i}}g^{j\bar{j}}\partial_kg_{i\bar{j}}\partial_{\bar{k}} g_{j\bar{i}}+C\tr_{g}\hat{g} \tr_{\hat{g}}g.
\end{align}
Combing $(3.10),(3.11),(3.12)$ we can get
$$\Delta \tr_{\hat{g}}g\geq \sum_{i,j}g^{i\bar{i}}g^{j\bar{j}}\partial_jg_{i\bar{j}}\partial_{\bar{j}} g_{j\bar{i}}+\Delta_{\hat{g}}\dot{\varphi}-C_1 |\nabla\varphi|^2_{\hat{g}}-C\tr_{g}\hat{g}\tr_{\hat{g}}g$$
Now we will control $\frac{|\partial \tr_{\hat{g}}g|^2}{(\tr_{\hat{g}}g)^2}$. As
$$\partial_i \tr_{\hat{g}}g=\partial_i \sum_j \varphi_{j\bar{j}}=\sum_j \partial_j \varphi_{i\bar{j}}=\sum_j(\partial_j g_{i\bar{j}}-\partial_j\hat{g}_{i\bar{j}}),$$
then
$$\frac{|\partial \tr_{\hat{g}}g|^2}{(\tr_{\hat{g}}g)^2}\leq \frac{1}{(\tr_{\hat{g}}g)^2}\sum_{i,j,k} g^{i\bar{i}}\partial_jg_{i\bar{j}}\partial_{\bar{k}} g_{k\bar{i}}-\frac{2}{(\tr_{\hat{g}}g)^2}\re(\sum_{i,j,k} g^{i\bar{i}}\partial_j\hat{g}_{i\bar{j}}\partial_{\bar{k}} g_{k\bar{i}})+C\tr_{g}\hat{g}.$$
Assume that $H$ achieves maximum at $(t_0, z_0),\ t_0>0$, then $\nabla H(t_0, z_0)=0$ gives
$$\frac{e^{-\frac{\alpha}{t}}\partial_{\bar{i}} \tr_{\hat{g}}g}{\tr_{\hat{g}}g}-Ae^{\Psi}\varphi_{\bar{i}}=0$$
That is, $$\sum_k\partial_{\bar{i}}g_{k\bar{k}}=Ae^{\frac{\alpha}{t}}\tr_{\hat{g}}g\varphi_{\bar{i}}e^{\Psi}$$
Together with $\partial_{\bar{k}} g_{k\bar{i}}=\partial_{\bar{k}} \hat{g}_{k\bar{i}}+\partial_{\bar{i}} g_{k\bar{k}}$, we get
\begingroup
\addtolength{\jot}{.4em}
\begin{align}
|\frac{2}{(\tr_{\hat{g}}g)^2}\re(\sum_{i,j,k} g^{i\bar{i}}\partial_j{\hat{g}}_{i\bar{j}}\partial_{\bar{k}} g_{k\bar{i}})|
&\leq  |\frac{2Ae^{\frac{\alpha}{t}}e^{\Psi}}{\tr_{\hat{g}}g}\re\sum_{i,j} g^{i\bar{i}}\partial_j{\hat{g}}_{i\bar{j}}\varphi_{\bar{i}}|+C\tr_{g}\hat{g}\nonumber\\
&\leq e^{\frac{\alpha}{t}}e^{\Psi}(A^2|\nabla \varphi|^2+\frac{C\tr_g\hat{g}}{(\tr_{\hat{g}}g)^2})+C\tr_g\hat{g} \nonumber\\
&\leq e^{\frac{\alpha}{t}}e^{\Psi}(A^2|\nabla \varphi|^2+C'\tr_{g}\hat{g})+C\tr_{g}\hat{g}
\end{align}\endgroup
Using the Cauchy-Schwarz inequality as in Yau's second order estimate [25] (see equation (2.21) in [20]), we have
\begin{align}
\frac{1}{\tr_{\hat{g}}g}\sum_{i,j,k}g^{i\bar{i}}\partial_jg_{i\bar{j}}\partial_{\bar{k}} g_{k\bar{i}}\leq \sum_{i,j} g^{i\bar{i}}g^{j\bar{j}}\partial_jg_{i\bar{j}}\partial_{\bar{j}} g_{j\bar{i}} \hspace{4mm}
\end{align}
Combing $(3.13)$ and $(3.14)$, we \vspace{-.5mm} get
\begingroup
\addtolength{\jot}{.3em}
\begin{align*}
\frac{|\partial \tr_{\hat{g}}g|^2}{(\tr_{\hat{g}}g)^2}&\ \leq\frac{1}{\tr_{\hat{g}}g}\sum_{i,j} g^{i\bar{i}}g^{j\bar{j}}\partial_jg_{i\bar{j}}\partial_{\bar{j}} g_{j\bar{i}}+ e^{\frac{\alpha}{t}}e^{\Psi}(A^2|\nabla \varphi|^2+C'\tr_{g}\hat{g})+C\tr_{g}\hat{g}.
\end{align*}\endgroup
\begingroup
\addtolength{\jot}{.4em}
So \begin{align}
\begin{split}
e^{-\frac{\alpha}{t}}\Delta \log\tr_{\hat{g}}g=&e^{-\frac{\alpha}{t}}(\frac{\Delta \tr_{\hat{g}}g}{\tr_{\hat{g}}g}-\frac{|\partial \tr_{\hat{g}}g|^2}{(\tr_{\hat{g}}g)^2} )\\
\geq\ & \frac{e^{-\frac{\alpha}{t}}}{\tr_{\hat{g}}g}\Delta_{\hat{g}} \dot{\varphi}-C_1\frac{e^{-\frac{\alpha}{t}}}{\tr_{\hat{g}}g}|\nabla\varphi|^2_{\hat{g}}-Ce^{-\frac{\alpha}{t}}\tr_{g}\hat{g}\\
&-A^2|\nabla \varphi|^2e^{\Psi}-C'\tr_{g}\hat{g}e^{\Psi}
\end{split}
\end{align}
\endgroup
From $(3.3)$ we have $\tr_{\hat{g}}g\leq \ C(\tr_{g}\hat{g})^{n-1}$ for some constant $C$. Now put $(3.15)$ into $(3.8)$ and using that $\varphi, \ \dot{\varphi}$ is bounded and $(3.5)$, we have
\begingroup
\addtolength{\jot}{.5em}
\begin{align*}
(\frac{\partial}{\partial t}-\Delta )H\leq\  &C\log\tr_{\hat{g}}g-A\dot{\varphi}e^{\Psi}+C_1+Ce^{-\frac{\alpha}{t}}\tr_{g}\hat{g}\\
&+C\tr_{g}\hat{g}e^{\Psi}+Ane^{\Psi}-A\tr_{g}\hat{g}e^{\Psi}\\
\leq\  &C'\log\tr_{\hat{g}}g+AC'e^{\Psi}-(A-C)e^{\Psi}\tr_{g}\hat{g}+Ce^{-\frac{\alpha}{t}}\tr_{g}\hat{g}\\
\leq\ &-(A-C-C_1)e^{\Psi}\tr_{g}\hat{g}+ AC'e^{\Psi}
\end{align*}
\endgroup
Choosing $A$ large enough such that $A-C-C_1\geq 0$, then at $(t_0, z_0),$
$$0\leq -(A-C-C_1)\tr_{g}\hat{g}+AC'.$$
for $t\leq T'$ gives $\tr_{g}\hat{g}\leq C'$ at $(t_0, z_0)$, which implies that $H\leq C$ for some constant $C$ depending on $\sup|\varphi_0|$ and $\sup|\dot{\varphi}_0|$. Then we obtain the desired estimate $(3.7)$.
\end{proof}
Now we give the third order estimate. Our proof is based on the arguments in [14, 19]( see also [24]).  As in [25], consider $S=g^{i\bar{p}}g^{q\bar{j}}g^{k\bar{r}}\varphi_{i\bar{j}k}\varphi_{\bar{p}q\bar{r}}$
where $\varphi_{i\bar{j}k}=\hat{\nabla}_k\varphi_{i\bar{j}}$. We introduce the tensor $\Phi_{ij}^{\ \ k}=\Gamma_{ij}^k-\hat{\Gamma}_{ij}^k$ and then $$S=|\Phi|^2=g^{i\bar{p}}g^{j\bar{q}}g_{k\bar{r}}\Phi_{ij}^{\ \ k}\Phi_{\bar{p}\bar{q}}^{\ \ \bar{r}}.$$
From now on, we will write $k(t), k_1(t), k_2(t)$, ... for a function of the form $Ke^{\lambda Ce^{\alpha/t}}$ where $e^{Ce^{\alpha/t}}$ is the bound in Proposition $3.2$, and $K, \lambda$ are constants depending only on $\hat{\omega}, F$. In the proof of the following proposition, We will use the estimates $|\nabla\varphi(t)|^2_{\hat{g}}\leq k(t),\ \tr _{\hat{g}}g\leq k(t)$ repeatedly.

\begin{prop}
There exists a smooth function $C(t)>0$ on $(0,T']$ depending only on $\sup |\varphi_0|,\ \sup |\dot{\varphi}_0|$ and blowing up as $t\rightarrow 0$ such that $S<C(t)$ for $t\leq T'$.
\end{prop}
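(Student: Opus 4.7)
The strategy, following [14, 19, 24], is to derive an evolution inequality for $S=|\Phi|^2$, combine it with the evolution of $\tr_{\hat g}g$ already obtained in the proof of Proposition 3.2, and apply the maximum principle on $[0,T']\times M$ to a weighted auxiliary function of the form $H=e^{-\beta/t}S+Ae^{-\alpha/t}\tr_{\hat g}g+Be^{\Psi}$, with $\Psi=A(\sup\varphi-\varphi)$ as in Proposition 3.2. The exponential weights ensure that $H(0^+)$ is bounded in terms of $\sup|\varphi_0|,\sup|\dot\varphi_0|$ alone (as in the second-order argument), while still permitting a time-dependent bound on $S$.

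\textbf{Step 1 (evolution of $S$).} Writing $S=g^{i\bar p}g^{j\bar q}g_{k\bar r}\Phi^{\ \ k}_{ij}\Phi^{\ \ \bar r}_{\bar p\bar q}$, I compute $(\partial_t-\Delta)S$ by differentiating in $t$ via $\partial_t g_{i\bar j}=\varphi_{i\bar j t}$ and applying $\Delta=g^{p\bar q}\nabla_p\nabla_{\bar q}$, handling commutators by (3.1) and the Bianchi-type identities (3.2). The fourth-order terms cancel once the flow equation (3.1) is differentiated twice and $\partial_t\Phi$ is substituted, leaving the standard good quantity $|\nabla\Phi|^2+|\bar\nabla\Phi|^2$ together with three families of errors: (a) contractions of $\Phi\otimes\Phi$ with the curvature and torsion of $\hat g$, bounded by $k(t)S$; (b) cross-terms linear in $\nabla\Phi$ coming from covariant derivatives of the torsion of $\hat g$, from $\Ric(\hat g)$, and from $\nabla F$, absorbed into the good term via Cauchy-Schwarz at cost $k(t)(S+1)$; (c) a cubic contribution of order $k(t)S^{3/2}$ arising from three-fold contractions of $\Phi$ with itself. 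The outcome is
\[
(\partial_t-\Delta)S\ \leq\ -c\bigl(|\nabla\Phi|^2+|\bar\nabla\Phi|^2\bigr)+k(t)\bigl(S^{3/2}+S+1\bigr).
\]

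\textbf{Step 2 (combination and maximum principle).} The proof of Proposition 3.2 produced the positive third-order quantity $\sum_{i,j}g^{i\bar i}g^{j\bar j}|\partial_j g_{i\bar j}|^2$ inside $\Delta\tr_{\hat g}g$; in the Guan-Li coordinates of Lemma 3.2 this is comparable to $S$ modulo the $k(t)$-bounded factors $\tr_{\hat g}g$ and $\tr_g\hat g$, and together with $\partial_t\tr_{\hat g}g=\Delta_{\hat g}\dot\varphi$ gives
\[
(\partial_t-\Delta)\tr_{\hat g}g\ \leq\ -c'k(t)^{-1}S+k(t).
\]
At an interior maximum $(t_0,z_0)$ of $H$, the inequality $(\partial_t-\Delta)H\ge 0$ combined with Step 1, the estimate above, and the time-derivatives $\beta t^{-2}e^{-\beta/t}S$, $\alpha t^{-2}e^{-\alpha/t}\tr_{\hat g}g$ of the weights yields an inequality in which, for $A$ and $B$ chosen sufficiently large (allowed to depend on $t_0$), the dominant negative term $-Ae^{-\alpha/t_0}c'k(t_0)^{-1}S$ absorbs both the $S^{3/2}$ contribution (via Young's inequality, dichotomizing on whether $S(t_0,z_0)$ is large or small) and the $S$- and weight-derivative contributions. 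One concludes $S(t_0,z_0)\le k_1(t_0)$ for an explicit $k_1$; combined with the bound on $H(0^+)$, this gives the desired $S(t,z)\le C(t)$ on $(0,T']\times M$.

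\textbf{Main obstacle.} The principal difficulty is the precise bookkeeping in the calculation of $(\partial_t-\Delta)S$ on a Hermitian manifold. Commuting $\nabla_i$ with $\nabla_{\bar j}$ on the mixed tensor $\Phi$ produces curvature terms lacking the usual Kähler symmetries, and the Bianchi-type identities (3.2) carry torsion corrections $-\nabla T$ that propagate throughout the calculation. Every extra term must be shown to be absorbable into $|\nabla\Phi|^2+|\bar\nabla\Phi|^2$ without exceeding the $k(t)(S^{3/2}+S+1)$ budget; this is exactly the point at which our argument diverges from the Kähler computations of [14, 19], and where the special coordinate system of Lemma 3.2 is essential.
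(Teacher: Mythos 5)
Your proposal correctly identifies the overall ingredients — the evolution inequality for $S$, the coupling with $\tr_{\hat g}g$, a maximum-principle argument with $t$-dependent weights, and the special role of Lemma 3.2 — but the specific auxiliary function you choose does not close the argument, and you are missing one error term in the evolution of $S$.

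The auxiliary function $H=e^{-\beta/t}S+Ae^{-\alpha/t}\tr_{\hat g}g+Be^{\Psi}$ is purely additive in $S$ and $\tr_{\hat g}g$. When you compute $(\partial_t-\Delta)H$ at a maximum, the only negative contribution controlling $S$ comes from $Ae^{-\alpha/t}(\partial_t-\Delta)\tr_{\hat g}g\leq -Ae^{-\alpha/t}S/k_2(t)+\dots$, which is \emph{linear} in $S$. Your Step 1 leaves a $+k(t)S^{3/2}$ term, and no amount of Young's inequality converts a linear $-S$ into something that absorbs $S^{3/2}$ without already knowing $S$ is bounded; the "dichotomize on whether $S$ is large" suggestion does not rescue this, because when $S$ is larger than the threshold the inequality $0\leq(\partial_t-\Delta)H$ becomes $0\leq(\text{positive})$, which is vacuous and gives no bound on $H$. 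The paper (following Phong--\u{S}e\u{s}um--Sturm and [18]) instead uses the first summand $\frac{S}{(C_1(t)-\tr_{\hat g}g)^2}$, in which $\tr_{\hat g}g$ appears in the denominator. Differentiating this in $t$ and $z$ produces the cross-term $\frac{2S}{(C_1-\tr_{\hat g}g)^3}(\partial_t-\Delta)\tr_{\hat g}g\leq -\frac{2S^2}{k_2(t)C_1(t)^3}+\dots$, a genuinely \emph{quadratic} negative term in $S$, and it is precisely this $-S^2$ that swallows the $k(t)S^{3/2}$ via Young. Your $e^{-\beta/t}S$ is only a multiplicative weight and never produces such a coupling.

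Separately, your evolution inequality for $S$ omits a term. In the paper's (3.20) there is an additional $+\sum_{i,j}|\varphi_{ij}|^2$ on the right, coming from $\nabla_iB_j^{\ k}$ via $\nabla_iF_{j\bar l}(\varphi,z)$, which involves the \emph{unmixed} second derivatives $\varphi_{ij}$. These are not controlled by $S$ (which involves $\hat\nabla\varphi_{i\bar j}$) nor by $\tr_{\hat g}g$. To absorb them, the paper adds a third summand $\frac{|\nabla\varphi|^2_{\hat g}}{C_3(t)}$ to the test function and uses the evolution inequality $(\partial_t-\Delta)|\nabla\varphi|^2_{\hat g}\leq -\sum_{i,j}|\varphi_{ij}|^2/k_3(t)+k_3(t)$. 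Your two-component $H$ (plus $Be^\Psi$, which plays no role analogous to this) cannot handle this term. Fixing the first issue by switching to the PSS-type summand would still leave this second gap.
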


\begin{proof}
As the calculations in [14, 19], first we have
\begin{align*}
\Delta S=&|\overline{\nabla}\Phi|^2+|\nabla\Phi|^2-\Phi_{ij}^{\ \ k}\left(R^{\ p\ q}_{p\ k}\Phi^{ij}_{\ \ q}-R_{p\ q}^{\ p\ i}\Phi_{\ \ k}^{qj}-R_{p\ q}^{\ p\ j}\Phi^{iq}_{\ \ k}\right)\\
&+2\re \left(\Delta \Phi_{ij}^{\ \ k}\Phi_{\ \ k}^{ij}\right),\\
\frac{\partial}{\partial t}S=&\Phi_{ij}^{\ \ k}\left(\frac{\partial}{\partial t} g^{i\bar{q}}\Phi_{\bar{q}\ k}^{\ j}+\frac{\partial}{\partial t} g_{k\bar{q}}\Phi^{ij\bar{q}}+\frac{\partial}{\partial t} g^{j\bar{q}}\Phi^i_{\ \bar{q}k}\right)+2\re \left(\frac{\partial}{\partial t}\Phi_{ij}^{\ \ k} \Phi^{ij}_{\ k}\right)\\
=&\Phi_{ij}^{\ \ k}\left(g^{q\bar{r}}\frac{\partial}{\partial t}g_{k\bar{r}}\Phi^{ij}_{\ \ q}-g^{i\bar{r}}\frac{\partial}{\partial t}g_{q\bar{r}}\Phi^{qj}_{\ \ k}-g^{j\bar{r}}\frac{\partial}{\partial t}g_{q\bar{r}}\Phi^{iq}_{\ \ k}\right )\\
&+2\re \left(\frac{\partial}{\partial t}\Phi_{ij}^{\ \ k} \Phi^{ij}_{\ k}\right).
\end{align*}
Thus
\begin{align}
(\frac{\partial}{\partial t}-\Delta)S=&-|\overline{\nabla}\Phi|^2-|\nabla\Phi|^2+\Phi_{ij}^{\ \ k}\left(B_k^{\ q}\Phi^{ij}_{\ \ q}-B^{\ i}_q\Phi^{qj}_{\ \ k}-B_q^{\ j}\Phi^{iq}_{\ \ k}\right)\nonumber\\
&+2\re \left((\frac{\partial}{\partial t}-\Delta)\Phi_{ij}^{\ \ k}\Phi_{\ \ k}^{ij}\right).
\end{align}
where $B_i^j=g^{j\bar{r}}\frac{\partial}{\partial t}g_{i\bar{r}}+R^{\ p\ j}_{p\ i}$. From eqution (3.1) and formula (2.2), we get
\begin{align*}
\frac{\partial}{\partial t} g_{i\bar{j}}&=-R_{i\bar{j}}+\hat{R}_{i\bar{j}}+F_{i\bar{j}}(\varphi,z),\nonumber \\
R^{\ p\ q}_{p\ k}&=R_k^{\ q}-\nabla^p T_{pk}^{\ \ q}-\nabla_k T^{pq}_{\ \ p}
\end{align*}
Hence \begin{align}
B_i^{\ j}=g^{j\bar{r}}(\hat{R}_{i\bar{r}}+F_{i\bar{r}}(\varphi,z))-\nabla^p T_{pi}^{\ \ j}-\nabla_i T^{pj}_{\ \ p}.
\end{align}
Here\vspace{.5mm} $ F_{i\bar{r}}(\varphi,z)=F_{i\bar{r}}+F''\varphi_i\varphi_{\bar{r}}+F'\varphi_{i\bar{r}}+F'_i \varphi_{\bar{r}}+F'_{\bar{r}} \varphi_i.$\\
Now we compute the evolution of $\Phi_{ij}^{\ \ k}$. First
\begin{align*}\frac{\partial}{\partial t}\Phi_{ij}^{\ \ k}
&=g^{k\bar{l}}\nabla_i\frac{\partial}{\partial t}g_{j\bar{l}}\\
&=-\nabla_iR_j^{\ k}+g^{k\bar{l}}(\nabla_i\hat{R}_{j\bar{l}}+\nabla_iF_{j\bar{l}}(\varphi, z)).
\end{align*}
Note that \begin{align}\nabla_{\bar{q}} \Phi_{ij}^{\ \ k}=-R_{i\bar{q}j}^{\ \ \ k}+\hat{R}_{i\bar{q}j}^{\ \ \ k}.
\end{align}
Then
\begin{align*}
\Delta \Phi_{ij}^{\ \ k}&=-\nabla^{\bar{p}}R^{\ \ \ k}_{i\bar{p}j}+\nabla^{\bar{p}}\hat{R}^{\ \ \ k}_{i\bar{p}j}\nonumber\\
&=\nabla_i (-R^{\ k}_j+\nabla^qT_{qj}^{\ \ k}+\nabla_jT_{\ \ p}^{pk})-T_{iq}^{\ r}R^{\ q\ k}_{r\ j}+\nabla^{\bar{p}}\hat{R}_{i\bar{p}j}^{\ \ \ k} .\end{align*}
So we have
\begin{align*}(\frac{\partial}{\partial t}-\Delta)\Phi_{ij}^{\ \ k}&=\nabla_i(g^{k\bar{l}}(\hat{R}_{j\bar{l}}+F_{j\bar{l}}(\varphi, z))-\nabla^qT_{qj}^{\ \ k}-\nabla_jT_{\ \ p}^{pk})+T_{iq}^{\ r}R^{\ q\ k}_{r\ j}-\nabla^{\bar{p}}\hat{R}_{i\bar{p}j}^{\ \ \ k}\\
&=\nabla_iB_j^{\ k}+T_{iq}^{\ r}R^{\ q\ k}_{r\ j}-\nabla^{\bar{p}}\hat{R}_{i\bar{p}j}^{\ \ \ k}.
\end{align*}
Combining with $(3.16)$, we get
\begin{align*}
(\frac{\partial}{\partial t}-\Delta )S=&-|\overline{\nabla}\Phi|^2-|\nabla\Phi|^2+
\Phi_{ij}^{\ \ k}\left(B_k^{\ q}\Phi^{ij}_{\ \ q}-B^{\ i}_q\Phi^{qj}_{\ \ k}-B_q^{\ j}\Phi^{iq}_{\ \ k}\right)\\
&2\re \left(\nabla_iB_j^{\ k}+T_{iq}^{\ r}R^{\ q\ k}_{r\ j}-\nabla^{\bar{p}}\hat{R}_{i\bar{p}j}^{\ \ \ k}\right)\Phi^{ij}_{\ \ k}.
\end{align*}
As $T_{ij\bar{k}}=\hat{T}_{ij\bar{k}},$
\begin{align}
\nabla^pT_{pk}^{\ \ q}=g^{p\bar{l}}g^{q\bar{r}}(\hat{\nabla}_{\bar{l}}\hat{T}_{pk\bar{r}}-\Phi_{\bar{l}\bar{r}}^{\ \ \bar{s}}\hat{T}_{pk\bar{s}}).
\end{align}
By (3.17) $$|B_{i\bar{j}}|\leq k(t)(S^{1/2}+1+|\nabla \varphi|^2_g+|\varphi_{i\bar{j}}|_g^2)\leq k(t)(S^{1/2}+1),$$
Now we want to control $\nabla_i B_j^{\ k}$. From $(3.17)$ we need the following estimates from [19] obtained by similar calculations as (3.19),
$$|\nabla_i\nabla^qT_{qj}^{\ k}|\leq k(t)(S+|\overline{\nabla}\Phi|+1), $$
$$|\nabla_i\nabla_jT_{\bar{p}}^{\ k\bar{p}}|\leq k(t)(S+|\nabla\Phi|+1).$$
Also $$|T_{iq}^{\ r}R^{q\ k}_{r\ j}|\leq k(t)(|\overline{\nabla}\Phi|+1),$$
$$ |\nabla^{\bar{p}}R_{\bar{p}ij}^{\ \ \ k}|\leq k(t)(S^{1/2}+1).$$
We bound the terms with $\varphi_{ij}$ and $\Phi^{ij}_{\ \ k}$ in $\re (\nabla_iB_j^{\ k}\Phi^{ij}_{\ \ k})$ by $|\varphi_{ij}|^2 + k(t)S$. Together with the above estimates we get
\begingroup
\addtolength{\jot}{.4em}
\begin{align}
(\frac{\partial}{\partial t}-\Delta )S&\leq k(t)(S^{3/2}+S+1)+\sum_{i,j}|\varphi_{ij}|^2-\frac{1}{2}(|\nabla \Phi|^2+|\overline{\nabla} \Phi|^2).
\end{align}
\endgroup
We will use the similar way as in [16, 19] to control the term $S^{3/2}$.
The evolution equations below can be obtained by following the computation in [18, 19].
\begin{align}\begin{split}(\frac{\partial}{\partial t}-\Delta )\tr_{\hat{g}}g&\leq -\frac{S}{k_2(t)}+k_2(t),\\
(\frac{\partial}{\partial t}-\Delta )|\nabla \varphi|^2_{\hat{g}}&\leq -\sum_{i,j}\frac{|\varphi_{ij}|^2}{k_3(t)}+k_3(t), \end{split} \end{align}
Now we will apply a maximum priciple argument to the quantity \vspace{-1mm}$$H=\frac{S}{(C_1(t)-\tr_{\hat{g}}g)^2}+\frac{\tr_{\hat{g}}g}{C_2(t)}+\frac{|\nabla \varphi|_{\hat{g}}^2}{C_3(t)}.$$
Here we can take $C_i(t)$ to be the form of $Le^{\lambda Ce^{\alpha/t}}$ where $C,\ \alpha$ is the same as in (3.7) and $L, \lambda$ will be determined later. Let $L,\ \lambda>2$ such that \begin{align}\frac{C_1(t)}{2}\leq C_1(t)-\tr_{\hat{g}}g\leq C_1(t),\ \ \ 0<-\frac{C'_i(t)}{C^2_i(t)}\leq \frac{1}{\sqrt{C_i(t)}},\ \  i=1,2,3.
\end{align}
We calculate the evolution of $H$.
\begingroup
\addtolength{\jot}{.4em}
\begin{align*}
(\frac{\partial}{\partial t}-\Delta ) H=&\frac{1}{(C_1(t)-\tr_{\hat{g}}g)^2}(\frac{\partial}{\partial t}-\Delta )S+\frac{2S}{(C_1(t)-\tr_{\hat{g}}g)^3}(\frac{\partial}{\partial t}-\Delta )\tr_{\hat{g}}g\\
&-\frac{4\re\nabla\tr_{\hat{g}}g \cdot \overline{\nabla}S}{(C_1(t)-\tr_{\hat{g}}g)^3}-\frac{6S|\nabla \tr_{\hat{g}}g|^2}{(C_1(t)-\tr_{\hat{g}}g)^4}-\frac{2C'_1(t)S}{(C_1(t)-\tr_{\hat{g}}g)^3}\\
&+\frac{1}{C_2(t)}(\frac{\partial}{\partial t}-\Delta )\tr_{\hat{g}}g+\frac{1}{C_3(t)}(\frac{\partial}{\partial t}-\Delta )|\nabla \varphi|^2_{\hat{g}}-\frac{C_2'(t)\tr_{\hat{g}}g}{C_2(t)^2}-\frac{C_3'(t)|\nabla \varphi|^2_{\hat{g}}}{C_3(t)^2}.
\end{align*}
\endgroup
Taking $C_2(t), C_3(t)$ large enough and using (3.5), (3.7) and (3.22), the last two terms can be bounded by a constant C. Assuming $S> 1$ at the maximum point of $H$, from (3.20) we have \begin{align*}
(\frac{\partial}{\partial t}-\Delta )S&\leq k_1(t)(S^{3/2}+1)+\sum_{i,j}|\varphi_{ij}|^2-\frac{1}{2}|\overline{\nabla} \Phi|^2.
\end{align*}Together with (3.21), (3.22), we get
\begingroup
\addtolength{\jot}{.5em}
\begin{align*}
0\leq &\ (\frac{\partial}{\partial t}-\Delta ) H\\
\leq& \left(\frac{4k_1(t)}{C^2_1(t)}S^{3/2}+ \frac{4k_1(t)}{C^2_1(t)} +\frac{4}{C^2_1(t)}\sum_{i,j}|\varphi_{ij}|^2-\frac{|\overline{\nabla} \Phi|^2}{2C_1^2(t)}\right) +\left(-\frac{2S^2}{k_2(t)C_1^3(t)}+\frac{16k_2(t)S}{C_1^3(t)}\right)\\
&+\frac{4|\re\nabla\tr_{\hat{g}}g \cdot \overline{\nabla}S|}{(C_1(t)-\tr_{\hat{g}}g)^3}+\frac{2S}{\sqrt{C_1^3(t)}}
+\left(-\frac{1}{k_2(t)C_2(t)}S+\frac{k_2(t)}{C_2(t)}\right)\\
&+\left(-\frac{1}{k_3(t)C_3(t)}\sum_{i,j}|\varphi_{ij}|^2+\frac{k_3(t)}{C_3(t)}\right)+C
\end{align*}
\endgroup
As $|\nabla \tr_{\hat{g}}g|\leq \frac{1}{64}k_5(t)S^{1/2}$ an\vspace{1mm}d $|\overline{\nabla} S|\leq 2S^{1/2}|\overline{\nabla}\Phi|,$
\begin{align*}\frac{4|\re\nabla \tr_{\hat{g}}g \cdot \overline{\nabla} S|}{(C_1(t)-\tr_{\hat{g}}g)^3}&\leq \frac{k_5(t)S|\overline{\nabla}\Phi|}{C^3_1(t)}
 \leq\frac{|\overline{\nabla} \Phi|^2}{2C_1^2(t)}+ \frac{k^2_5(t)S^2}{2C_1^4(t)}.\end{align*}
We will also use \vspace{.6mm}
\begingroup
\addtolength{\jot}{.6em}
\begin{align*}
\frac{4k_1(t)S^{3/2}}{C^2_1(t)}&\leq \frac{S^2}{k_2(t)C_1^3(t)}+\frac{4k_1^2(t)k_2(t)S}{C_1(t)}.\end{align*}
\endgroup
Recall that all $k_i(t), C_i(t)$ are functions of the form $Le^{\lambda Ce^{\alpha/t}}$. First choose $C_i(t)>k_i(t)$, then fix $C_2(t), C_3(t)$. Now take the constant $L, \lambda$ in $C_1(t)$ to be large enough such that  $\frac{k^2_5(t)}{2C_1^4(t)}\leq \frac{1}{k_2(t)C_1^3(t)},\ \frac{4}{C^2_1(t)}\leq \frac{1}{k_3(t)C_3(t)}$ and $\frac{16k_2(t)}{C_1^3(t)}+\frac{2}{\sqrt{C_1^3}(t)}+\frac{4k_1^2(t)k_2(t)}{C_1(t)}\leq \frac{1}{2k_2(t)C_2(t)}$. The above estimates then give that at $(t_0, z_0)$,
$$0\leq \frac{-1}{2k_2(t)C_2(t)}S+C',$$
for some constant $C'$. Therefore $S\leq 4C'k_2(t)C_2(t)\leq C'C_1(t)$ at $(t_0, z_0)$.
It follows that $H$ is bounded by some constant $C$ depending only on $\sup|\varphi_0|$ and $\sup|\dot{\varphi}_0|$, which gives the desired estimate of $S$.
\end{proof}

Using $(3.7)$, the above estimate $S\leq C(t)$ implies that $\|\varphi(t)\|_{C^{2+\alpha}(M,g)}$ can be bounded by a smooth function $C(t)$ on $(0, T']$, which depends only on $\sup|\varphi_0|$ and $\sup|\dot{\varphi}_0|$. Differentiating the equation $(3.1)$ in $t$, we get
\begin{align}
\frac{\partial \dot{\varphi}}{\partial t}=\triangle \dot{\varphi}+ F'(\varphi,z)\dot{\varphi}
\end{align}To apply parabolic Schauder estimates to obtain higher order estimates, we still need to bound the derivatives of $g_{i\bar{j}}$ in the $t$-direction. Then it is sufficient to bound $|\Ric(g)|$.

\begin{lem}
There exists a smooth function $C(t)>0$ on $(0,T']$ depending only on $\sup |\varphi_0|$ and $\sup |\dot{\varphi}_0|$ and blowing up as $t\rightarrow 0$ such that $|\Ric|<C(t)$ for $t\leq T'$.
\end{lem}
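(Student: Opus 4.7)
The plan is to parallel the structure of Proposition 3.3: derive an evolution inequality for $|\Ric|^2$, use the torsion-modified Bianchi identities in (2.2) to isolate the good diffusion terms $-|\nabla\Ric|^2-|\overline{\nabla}\Ric|^2$, absorb the extra Hermitian torsion terms by Cauchy-Schwarz, and then apply the maximum principle to an auxiliary quantity of the form
$$H \;=\; \frac{|\Ric|^2}{(C_1(t)-\tr_{\hat g}g)^2} + \frac{S}{C_2(t)} + \frac{\tr_{\hat g}g}{C_3(t)},$$
where the time-dependent barriers $C_i(t)$ are functions of the form $Le^{\lambda Ce^{\alpha/t}}$ as in the proof of Proposition 3.3, with $L,\lambda$ chosen large at the end.

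First I would compute the evolution of $R_{i\bar j}$. From $\partial_t g_{i\bar j} = -R_{i\bar j}+\hat R_{i\bar j}+F_{i\bar j}(\varphi,z)$ and $R_{i\bar j}=-\partial_i\partial_{\bar j}\log\det g$, differentiating and commuting covariant derivatives using (2.1)--(2.2) give
$$(\tfrac{\partial}{\partial t}-\Delta)R_{i\bar j} \;=\; \Rm\ast\Ric + T\ast\nabla\Ric + \nabla T\ast\Ric + (\text{bounded terms from }\hat R,F,\nabla F).$$
Squaring and contracting then yields
$$(\tfrac{\partial}{\partial t}-\Delta)|\Ric|^2 \;\le\; -|\nabla\Ric|^2-|\overline{\nabla}\Ric|^2 \;+\; C\bigl(|\Rm||\Ric|^2 + |T||\nabla\Ric||\Ric| + k(t)(|\Ric|^2+1)\bigr),$$
where the bad coupling $|T||\nabla\Ric||\Ric|$ is the essentially new feature over the K\"ahler setting (arising from $\nabla^p R_{p\bar j i\bar l}=\nabla_i R_{\bar j\bar l}+T\ast R$). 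Since $|T|_g\le k(t)$ by the $\tr_{\hat g}g$ and $\tr_g\hat g$ bounds and $T_{ij\bar k}=\hat T_{ij\bar k}$, Cauchy--Schwarz lets me absorb this term into $\tfrac12|\nabla\Ric|^2$, leaving
$$(\tfrac{\partial}{\partial t}-\Delta)|\Ric|^2 \;\le\; -\tfrac12\bigl(|\nabla\Ric|^2+|\overline{\nabla}\Ric|^2\bigr) + k(t)|\Rm||\Ric|^2 + k(t)\bigl(|\Ric|^2+1\bigr).$$

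The crucial observation for the maximum principle step is that $|\Rm|$ is not directly controlled, but by (3.18) we have $\overline{\nabla}\Phi = -\Rm+\hat\Rm$, so $|\overline{\nabla}\Phi|^2_g$ dominates $|\Rm|^2_g$ up to $k(t)$. Consequently, the $-\tfrac{1}{2C_1(t)^2}|\overline{\nabla}\Phi|^2$ contribution coming from $(\tfrac{\partial}{\partial t}-\Delta)S$ (compare (3.20) in the proof of Proposition 3.3) provides a $-|\Rm|^2/C_2(t)$ term in $(\tfrac{\partial}{\partial t}-\Delta)H$, and then $|\Rm||\Ric|^2\le\varepsilon|\Rm|^2+\varepsilon^{-1}|\Ric|^4$ with $\varepsilon\sim 1/C_2(t)$ absorbs the cubic curvature coupling at the cost of a $k(t)C_2(t)|\Ric|^4$ term. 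Combining this with the evolution inequalities (3.21) for $\tr_{\hat g}g$ and $|\nabla\varphi|^2_{\hat g}$, evaluating at a maximum point $(t_0,z_0)$ of $H$, and using $|\nabla H|=0$ to dispose of the gradient cross term $\tfrac{4\,\re\nabla\tr_{\hat g}g\cdot\overline{\nabla}|\Ric|^2}{(C_1(t)-\tr_{\hat g}g)^3}$ exactly as in the proof of Proposition 3.3, I obtain at $(t_0,z_0)$ a polynomial inequality forcing $|\Ric|$ to be bounded by a function of the form $C(t)$ blowing up as $t\to 0$.

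The main obstacle, as the author also signals in the introduction, is the careful treatment of the Hermitian Bianchi terms that couple $\nabla\Ric$ with $T$ and $\nabla T$: several of these have no K\"ahler analogue and, if mishandled, would leave a term growing faster than what the good $-|\nabla\Ric|^2-|\overline{\nabla}\Ric|^2$ can control. The remedy is systematic use of (2.2) to trade unwanted derivatives of $\Rm$ for $\nabla\Ric$ plus $T\ast\Rm$ terms, followed by Cauchy--Schwarz with weights tuned to $C_1(t),C_2(t)$; the secondary obstacle is that $|\Rm|$ has no independent bound, which forces the coupling with $S$ through (3.18) described above.
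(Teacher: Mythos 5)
There is a genuine gap in the scheme, and it is precisely the term you flag as a ``cost'' and then wave away. The paper does \emph{not} run the maximum principle on $|\Ric|^2$: it works with the first power $|\Ric|$, using $(\tfrac{\partial}{\partial t}-\Delta)|\Ric| = \tfrac{1}{2|\Ric|}\bigl((\tfrac{\partial}{\partial t}-\Delta)|\Ric|^2 + 2|\nabla|\Ric||^2\bigr)$, and applies the maximum principle to the simple quantity
$$H \;=\; \frac{|\Ric|}{C_1(t)} + \frac{S}{C_2(t)},$$
not to the Proposition~3.3-style barrier you propose. The reason for the square root is exactly to degrade the curvature coupling. The reaction term $\Rm\ast\Ric$ in $(\tfrac{\partial}{\partial t}-\Delta)\Ric$ produces, for $|\Ric|^2$, a cubic term of order $|\Rm||\Ric|^2$; together with the $|\Ric|^3$ coming from $\tfrac{\partial}{\partial t}g^{-1}$ in the definition of the norm, these are cubic in curvature. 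After dividing by $2|\Ric|$ they become $|\Rm||\Ric|$ and $|\Ric|^2$, both dominated by $|\Rm|^2$. This matters because the only genuinely negative curvature term available in $(\tfrac{\partial}{\partial t}-\Delta)H$ is the $-Q/(2C_2(t))$ coming from $S$, with $Q=|\nabla\Phi|^2+|\overline{\nabla}\Phi|^2$, and by $(3.24)$ this controls only $|\Rm|^2$ (degree two). In your scheme the bad term is $k(t)|\Rm||\Ric|^2/C_1(t)^2$; bounding it crudely by $|\Rm|^3\le (\tfrac32 Q + k_5(t))^{3/2}$, or by Cauchy--Schwarz $\varepsilon|\Rm|^2 + \varepsilon^{-1}|\Ric|^4$ as you do, leaves either $Q^{3/2}$ or $|\Ric|^4$: both strictly outgrow the good $-Q/C_2(t)$, so the inequality at the maximum point is not coercive and cannot force a bound, no matter how the $C_i(t)$ are tuned. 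This is not a bookkeeping issue; it is the reason the argument (in the paper, and already in the K\"ahler case of [18]) is run on $|\Ric|$ rather than $|\Ric|^2$.

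Two smaller points. First, taking $|\Ric|$ introduces the extra positive term $+|\nabla|\Ric||^2/|\Ric|$ alongside $-|\nabla\Ric|^2/|\Ric|$; the paper disposes of it by using $\nabla H=0$ at the maximum to convert $\nabla|\Ric|$ into $\nabla S$ (giving $|\nabla S|^2\le 2SQ$, again absorbable into $-Q/C_2(t)$), which is a different use of the gradient condition from the Proposition~3.3 cross-term cancellation you invoke. Second, the $(C_1(t)-\tr_{\hat g}g)^{-2}$ weight and the $\tr_{\hat g}g/C_3(t)$ and $|\nabla\varphi|^2/C_3(t)$ terms, which were needed in Proposition~3.3 to absorb $S^{3/2}$, are unnecessary here: with $|\Ric|$ in the first power the only quantities to couple are $|\Ric|$ and $S$, and the simple $H$ above suffices. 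Your identification of the Hermitian-specific ingredients (the torsion Bianchi identities $(2.2)$, the bounds $|\nabla T|,|\overline{\nabla}T|\le k(t)(1+S^{1/2})$, and the use of $(3.18)$ and $(3.24)$ to convert $-Q$ into $-|\Rm|^2$) is all correct; the gap is solely the choice of the quantity $|\Ric|^2$ and the resulting $|\Ric|^4$ that cannot be closed.
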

\begin{proof}To compute the evolution of $|\Ric|$, first
$$\frac{\partial}{\partial t}R_{j\bar{k}}=-g^{l\bar{q}}\nabla_{\bar{k}}\nabla_j\frac{\partial}{\partial t}g_{l\bar{q}}\\
=-g^{l\bar{q}}\nabla_{\bar{k}}\nabla_j(-R_{l\bar{q}}+\hat{R}_{l\bar{q}}+F_{l\bar{q}}(\varphi,z))$$
Use $(2.1),(2.2)$ we have
\begingroup
\addtolength{\jot}{.4em}
\begin{align*}
\nabla_{\bar{k}}\nabla_jR_{l\bar{q}}&=\nabla_l\nabla_{\bar{q}} R_{j\bar{k}}-\nabla_l T_{\bar{k}\bar{q}}^{\ \ \bar{s}}R_{j\bar{s}}+T_{\bar{k}\bar{q}}^{\ \ \bar{s}}\nabla_lR_{j\bar{s}}+R^{\ \ \ r}_{l\bar{k}j}R_{r\bar{q}}\\
&\ \ \ -R^{\ \ \bar{s}}_{l\bar{k}\ \bar{q}}R_{j\bar{s}}+\nabla_{\bar{k}}T^{\ r}_{lj}R_{r\bar{q}}+T^{\ r}_{lj}\nabla_{\bar{k}}R_{r\bar{q}}
\end{align*}
\endgroup
So
\begingroup
\addtolength{\jot}{.4em}
\begin{align*}
(\frac{\partial}{\partial t}-\Delta ) R_{j\bar{k}}=&\nabla_{\bar{k}}T^{\ r}_{lj}R^{\ l}_r+T^{\ r}_{lj}\nabla_{\bar{k}}R^{\ l}_r+R^{\ \ \ r}_{l\bar{k}j}R^{\ l}_r\\
&-R^{\ \ \bar{s}l}_{l\bar{k}}R_{j\bar{s}}+\nabla^{\bar{q}}T_{\bar{k}\bar{q}}^{\ \ \bar{s}}R_{j\bar{s}}+T_{\bar{k}\bar{q}}^{\ \ \bar{s}}\nabla^{\bar{q}}R_{j\bar{s}}\\
&-g^{l\bar{q}}\nabla_{\bar{k}}\nabla_j(\hat{R}_{l\bar{q}}+F_{l\bar{q}}(\varphi,z)).
\end{align*}
\endgroup
From (3.19) we get $$|\nabla T|\leq k(t)(1+S^{1/2}), \ \ |\overline{\nabla} T|\leq k(t)(1+S^{1/2}).$$
where $S$ is bounded by some $k(t)$ by Proposition 3.3.
Note that $(3.18)$ gives
\begin{align}
R_{i\bar{j}}=\hat{R}_{i\bar{j}}+\nabla_{\bar{j}}\Phi_{ik}^{\ \ k},\ \ \  |\overline{\nabla}\Phi|\leq |\Rm|+ k(t).
\end{align}
Use this and similar calculation as (3.19) to get
$$|g^{l\bar{q}}\nabla_{\bar{k}}\nabla_j\hat{R}_{l\bar{q}}|\leq k(t)(|\Rm|+1),$$
Also we have $$|g^{l\bar{q}}\nabla_{\bar{k}}\nabla_jF_{l\bar{q}}(\varphi,z)|\leq k(t)(|\Rm|+1).$$
Therefore
\begingroup
\addtolength{\jot}{.4em}
\begin{align*}
|(\frac{\partial}{\partial t}-\Delta ) R_{j\bar{k}}|&\leq k(t)(|\nabla \Ric|+|\Rm|^2+|\Rm|+1)\\
&\leq k(t)(|\nabla \Ric|+|\Rm|^2+1).
\end{align*}
\endgroup
As $|\frac{\partial}{\partial t}g_{i\bar{j}}|=|-R_{i\bar{j}}+\hat{R}_{i\bar{j}}+F_{i\bar{j}}(\varphi,z)|\leq |\Ric|+ k(t)$, direct computation gives
\begin{align*}
(\frac{\partial}{\partial t}-\Delta ) |\Ric|^2&\leq k(t)(|\Ric|^3 +|\Ric|^2)+ 2|(\frac{\partial}{\partial t}-\Delta )\Ric||\Ric|-2|\nabla \Ric|^2.
\end{align*}
We then obtain the following
\begin{align*}
(\frac{\partial}{\partial t}-\Delta ) |\Ric|&=\frac{1}{2|\Ric|}(\frac{\partial}{\partial t}-\Delta ) |\Ric|^2+2|\nabla |\Ric||^2)\\
&\leq k_1(t)(|\nabla \Ric|+|\Rm|^2+1)-\frac{|\nabla \Ric|^2}{|\Ric|}+\frac{|\nabla |\Ric||^2}{|\Ric|}
\end{align*}
Let us consider $$H=\frac{|\Ric|}{C_1(t)}+\frac{S}{C_2(t)},$$ as in [18] where $C_1(t), C_2(t)$ are the functions of the form $Le^{\lambda Ce^{\alpha/t}}$ as in the proof of Propostion 3.3 such that $-\frac{C'_i(t)}{C^2_i(t)}\leq \frac{1}{\sqrt{C_i(t)}}, i=1,2 $. Assume $H$ achieves maximum at a point $(t_0, z_0)$, $t_0>0$, and assume $|\Ric|\geq 1$ at $(t_0, z_0)$. From (3.20) and Proposition 3.2, 3.3 we have
$$(\frac{\partial}{\partial t}-\Delta )S\leq -\frac{1}{2}Q+k_2(t)$$ where $Q=|\nabla \Phi|^2+|\overline{\nabla} \Phi|^2$. Take $C_1(t)>k_1(t), C_2(t)\geq \max\{S, S^2, k_2(t)\}$. Direct computation gives
\begingroup
\addtolength{\jot}{.4em}
\begin{align}
(\frac{\partial}{\partial t}-\Delta )H\leq\ &\frac{k_1(t)(|\nabla \Ric|+|\Rm|^2)}{C_1(t)}-\frac{|\nabla \Ric|^2}{C_1(t)|\Ric|}+\frac{|\nabla |\Ric||^2}{C_1(t)|\Ric|}\nonumber\\
&+\frac{|\Ric|}{\sqrt{C_1(t)}}+\left(-\frac{Q}{2C_2(t)}+\frac{k_2(t)}{C_2(t)}\right)+\frac{S}{\sqrt{C_2(t)}}\nonumber\\
\leq\ &\frac{k_3(t)|\Rm|^2}{C_1(t)}-\frac{|\nabla \Ric|^2}{2C_1(t)|\Ric|}+\frac{|\nabla |\Ric||^2}{C_1(t)|\Ric|}-\frac{Q}{2C_2(t)}+C
\end{align}
\endgroup
where the last inequality we use
\begingroup
\addtolength{\jot}{.4em}
\begin{align*}
\frac{k_1(t)|\nabla \Ric|}{C_1(t)}\leq \frac{|\nabla \Ric|^2}{2C_1(t)|\Ric|}+\frac{k_1^2(t)|\Ric|}{2C_1(t)}
\end{align*}
\endgroup
Using
$\nabla H=0$ at $(t_0, z_0)$ and $|\nabla |\Ric||\leq |\nabla\Ric|$, we get
 \begingroup
\addtolength{\jot}{.5em}
\begin{align*}
\frac{|\nabla |\Ric||^2}{C_1(t)|\Ric|}&=\frac{|\nabla S\cdot \overline{\nabla} |\Ric||}{C_2(t)|\Ric|}\\
&\leq \frac{|\nabla \Ric|^2}{2C_1(t)|\Ric|}+\frac{C_1(t)|\nabla S|^2}{2C_2^2(t)|\Ric|}\\
&\leq \frac{|\nabla \Ric|^2}{2C_1(t)|\Ric|}+\frac{C_1(t)k_4(t)Q}{C_2^2(t)|\Ric|}
\end{align*}
\endgroup
where\vspace{.8mm} the last inequality we use $ |\nabla S|^2\leq 2S(|\overline{\nabla}\Phi|^2+|\nabla\Phi|^2).$ From (3.23), $$|\Rm|^2\leq \frac{3}{2}Q+k_5(t),\ \ \ |\Ric|\leq \sqrt{Q}+k_6(t).$$
Choose $C_2(t)\geq 8k_4(t)$. Fix $C_2(t)$ and choose $C_1(t)\geq \max\{k_3(t)k_5(t), k_6(t)\}$ large enough such that $\frac{3k_3(t)}{2C_1(t)}\leq \frac{1}{4C_2(t)}$ and then fix $C_1(t)$.
Combining the above estimates, we obtain that at $(x_0,t_0)$,
$$
0\leq (\frac{\partial}{\partial t}-\Delta )H\leq -\frac{Q}{4C_2(t)}+\frac{C_1(t)Q}{8C_2(t)|\Ric|}+C'$$
for some constant $C'$. If $\frac{|\Ric|}{C_1(t)}\leq 1$, then $H\leq 2$ at $(t_0, z_0)$ and we obtain the estimate for $|\Ric|$. Otherwise at $(t_0, z_0)$
$$0\leq -\frac{Q}{8C_2(t)}+C'.$$
Therefore $Q\leq 8C'C_2(t)\leq 8C'C_1(t)$ at $(t_0, z_0)$. By our choice of $C_1(t), C_2(t)$, $H$ is bounded by some constant $C$ depending only on $\sup|\varphi_0|$ and $\sup|\dot{\varphi}_0|$ , which gives the bound for $|\Ric|$.
\end{proof}
The estimates we have obtained imply that the parabolic $C^{\alpha, \alpha/2}$ norm of the coefficients in equation (3.23) can be bounded. The parabolic Schauder estimates then give a $C^{2+\alpha, 1+\alpha/2}$ bound for $\dot{\varphi}$ in $[\epsilon, T']\times$ M for any $\epsilon>0$, with the bounds only depending on $\epsilon,\ \sup|\varphi_0|$ and $\sup|\dot{\varphi}_0|$. Similarly we can obtain a $C^{2+\alpha, 1+\alpha/2}$ bound for $\varphi_k, \varphi_{\bar{k}}$ in $[\epsilon, T']\times M$. Differentiate the flow again and repeat using Schauder estimate, we obtain all higher order estimates for $\varphi$. Let $\epsilon\rightarrow 0$, we obtain the bounds in Proposition 3.1 which blow up as $t\rightarrow 0$. Particularly, there exists a smooth solution on $[0,T]$ where $T$ is the same as in Lemma 3.1 and depends only on $\sup|\varphi_0|$.

\section{ proof of Theorem 1.1}
Assume that $\hat{\omega}$ satisfies the condition (1.3), then it follows from [13] that Ko{\l}odziej's stability result (Corollary 4.4 in [11]) is also true. In particular if
\vspace{2mm}
\begin{align*}
(\hat{\omega}+\sqrt{-1}\partial\bar{\partial}\phi_1)^n=(\hat{\omega}+\sqrt{-1}\partial\bar{\partial}\phi_2)^n=f\hat{\omega}^n,
\end{align*}
with $f\geq 0 \in L^p(M,\hat{\omega}),\ p>1$ and $\int_Mf\hat{\omega}^n=\int_M\hat{\omega}^n$,\vspace{1.3mm} then $\phi_1-\phi_2=const$. Now suppose that $\phi\in PSH(M,\hat{\omega})\cap L^\infty(M)$ is a weak solution of the equation \vspace{1.3mm}
\begin{align}
(\hat{\omega}+\sqrt{-1}\partial\bar{\partial}\phi)^n=e^{-F(\phi, z)}\hat{\omega}^n.
 \end{align}
 Then $f(z)=e^{-F(\phi(z), z)}\in L^p(M,\hat{\omega}),\ p>1$ as $\phi$ is bounded for $t\leq T$. Also the condition (1.3) gives that $\int_Mf\hat{\omega}^n=\int_M\hat{\omega}^n$. Therefore theorem 5.2 in [3] indicates that $\phi$ is continuous. Approximating $\phi$ with a sequence of smooth functions $\phi_j$  such that
\begin{align}
\sup_M|\phi_j-\phi|\rightarrow 0,\ \ {as}\ \ j \rightarrow \infty
\end{align}
It follows from [21] there exist smooth functions $\psi_j$ such that
\begin{align}
(\hat{\omega}+\sqrt{-1}\partial\bar{\partial}\psi_j)^n=c_je^{-F(\phi_j,z)}\hat{\omega}^n
 \end{align}
 where $c_j>0$ are constants chosen to satisfy the integration equality of the above equation. We normalize $\psi_j$ as in [11]
 \begin{align}
 \sup (\psi_j-\phi)=\sup(\phi-\psi_j),
\end{align}
 the stability result from [13] gives
\begin{align}
\lim_{j\rightarrow \infty} \|\psi_j-\phi\|_{L^{\infty}}=0.
\end{align}\\
Consider the equations \begin{align}\frac{\partial \varphi_j}{\partial t}=\log\frac{(\hat{\omega} + \sqrt{-1}\partial \bar{\partial}\varphi_j)^n}{\hat{\omega}^n}+F(\varphi_j, z)-\log c_j.\end{align}
Applying Proprosition 3.1, there exist a sequence of smooth functions $\varphi_j$ with $\varphi_j(0)=\psi_j$ such that $\varphi_j$ solves the equations on $[0,T_j]$ where $T_j$ only depends on $\sup |\psi_j|$ and $\sup |\dot{\varphi}_j(0)|$. Using (4.3) and (4.6)
\begin{align}
\dot{\varphi}_j(0)=F(\psi_j,z)-F(\phi_j,z).
\end{align}
It follows from (4.2) and (4.5) that $\sup |\psi_j|$ and $\sup |\dot{\varphi}_j(0)|$ can be bounded by a constant only depending on $\sup|\phi|$. Therefore there exists a $T>0$ independent of $j$ such that $\varphi_j$ solve the equation (4.6) on $[0,T]$.
By Lemma 3.1 in [18], $\{\varphi_j\}$ is a Cauchy sequence in $C^0([0,T]\times M)$. Let
$$\beta(t,z)=\lim_{j\rightarrow \infty} \varphi_j,$$
which is continuous on $[0,T]\times M$. For any $\epsilon >0$, from the proof of Proposition 3.1, we have bounds on all derivatives of $\varphi_j$ for $t\in [\epsilon, T]$. Then $\beta\in C^{\infty}([\epsilon, T]\times M)$ and
$$\lim_{j\rightarrow \infty} \|\beta-\varphi_j\|_{C^k([\epsilon, T]\times M)}=0.$$
Lemma 3.1 gives that $|\dot{\varphi_j}(t)|\leq \sup|\dot{\varphi_j}(0)|e^{Ct}$, for $t\in [0,T]$. From (4.7) we get
$$\dot{\varphi}_j(0)\rightarrow 0\ \ {as} \ \ j\rightarrow \infty.$$
 Therefore for any $t>0$,
$$\dot{\beta}(t)=\lim_{j\rightarrow \infty} \dot{\varphi}_j(t)=0.$$
As it is continuous on $[0,T]$, we have $\beta(0)=\beta(t)$ for $t\in (0,T]$ is smooth. But $\beta(0)=\lim_{j\rightarrow \infty} \varphi_j(0)=\lim_{j\rightarrow \infty} \psi_j=\phi$, thus we get the smoothness of $\phi$.

\begin{remk}
Note that Proposition 3.1 holds on any compact Hermitian manifolds. If Ko{\l}odziej's stability result can be extended to general Hermitian manifolds, then we can remove the assumption (1.3) in Theorem 1.1.
\end{remk}

\author{School of Mathematics, \\ \indent University of Minnesota, Minneapolis, MN, 55455}\\
\indent \emph{E-mail address}: \texttt{niexx025@umn.edu}

\end{document}